\newtheorem{theorem}{Theorem}[section]
\newtheorem*{theorem*}{Theorem}
\newtheorem{corollary}[theorem]{Corollary}
\newtheorem*{corollary*}{Corollary}
\newtheorem{proposition}[theorem]{Proposition}
\newtheorem{lemma}[theorem]{Lemma}
\theoremstyle{remark}
\theoremstyle{definition}
\newtheorem{definition}[theorem]{Definition}
\newtheorem{example}[theorem]{Example}
\newcommand{\Hom}{\text{Hom}}
\newcommand{\Ext}{\text{Ext}}
\newcommand{\m}{\mathfrak{m}}
\newcommand{\lbrak}{\left\{\!\!\left\{}
\newcommand{\rbrak}{\right\}\!\!\right\}}
\newcommand{\Z}{\mathbb{Z}}
\newcommand{\f}{\underline{\mathbf{f}}}
\newcommand{\g}{\underline{\mathbf{g}}}
\newcommand{\fR}{\f}
\newcommand{\Ass}{\text{Ass}}
\newcommand{\Supp}{\text{Supp}\,}
\newcommand{\depth}{\text{depth}}
\newcommand{\Spec}{\text{Spec}}
\newcommand{\DDelta}{\Delta\mspace{-12mu}\Delta}
\newcommand{\ncfrob}{\langle F\rangle}
\newcommand{\tT}{\widetilde{T}}
\newcommand{\tS}{\widetilde{S}}
\title{The Fedder action and a simplicial complex of local cohomologies}
\author{Eric Canton and Monica Lewis}
\address{
  Department of Mathematics\\
  University of Michigan\\
  Ann Arbor, MI 48109
}
\subjclass[2010]{Primary: 13D45; Secondary: 13A35, 14B15}
\begin{document}
\maketitle
\begin{abstract}
Let $R$ be a regular ring of prime characteristic $p > 0$, and let $\f=f_1,\ldots,f_c$ be a permutable regular sequence of codimension $c\geq 1$. We describe a complex of $R\langle F \rangle$-modules, denoted $\DDelta^\bullet_{\f}(R)$, whose terms include $\DDelta^0_{\f}(R)=R/\f$ equipped with its natural Frobenius action, and $\DDelta^c_{\f}(R)=H^c_{\f}(R)$ equipped with a Frobenius action we refer to as the Fedder action. We show that $H^i(\DDelta^\bullet_{\f}(R))=0$ for all $i<c$, and that $H^c(\DDelta^\bullet_{\f}(R))$ is a copy of $H^c_{\f}(R)$ equipped with the usual Frobenius action. Using the $\DDelta^\bullet_{\f}(R)$ complex, we show that if $I\supseteq \f$ is an ideal such that $H^i_I(R)=0$ for $\text{ht}(I)<i<\text{ht}(I)+c$ (which is automatic if $R/I$ is Cohen-Macaulay), then the module $H^{\text{ht}(I/\f)+c}_{I/\f}(R/\f)$ has Zariski closed support.
\end{abstract}

%%%%%%%%%%%%%%%%%%%%%%%%%%%%%%%%%%%%%%%%%%%%%%%%%%%%%%%%%%%%%%%%%%%%%%%%%%%%%%%%%%%%%%%%%%%%%%%%%%%
%%%%%%%%%%%%%%%%%%%%%%%%%%%%%%%%%%%%%%%%%%%%%%%%%%%%%%%%%%%%%%%%%%%%%%%%%%%%%%%%%%%%%%%%%%%%%%%%%%%
% INTRODUCTION
%%%%%%%%%%%%%%%%%%%%%%%%%%%%%%%%%%%%%%%%%%%%%%%%%%%%%%%%%%%%%%%%%%%%%%%%%%%%%%%%%%%%%%%%%%%%%%%%%%%
%%%%%%%%%%%%%%%%%%%%%%%%%%%%%%%%%%%%%%%%%%%%%%%%%%%%%%%%%%%%%%%%%%%%%%%%%%%%%%%%%%%%%%%%%%%%%%%%%%%
\section{Introduction}
Let $R$ be a Noetherian ring of prime characteristic $p > 0$. For any ideal $I\subseteq R$ and any $i\geq 0$, the local cohomology module $H^i_I(R)$ carries a natural action of the Frobenius homomorphism $F:R\to R$. This action is not $R$-linear, but rather, satisfies the relation $F(r\eta)=r^p F(\eta)$ for all $r\in R$ and $\eta\in H^i_I(R)$. This may be viewed as giving $H^i_I(R)$ the structure of a module over the noncommutative ring
\[R\ncfrob:=\frac{R\{F\}}{\left(r^pF-Fr\,|\, r\in R\right)}.\]
From another perspective, to specify a Frobenius action on an $R$-module $M$ is to specify an $R$-linear map $\theta:\mathcal{F}_R(M)\to M$, where $\mathcal{F}_R:\text{Mod}_R\to \text{Mod}_R$ denotes the base change functor along the Frobenius homomorphism. The map $\theta$ is called the \textit{structure morphism} of the action on $M$. 

If $R$ is a regular ring, the natural Frobenius action on $H^i_I(R)$ has a number of desirable properties. First, $H^i_I(R)$ is finitely generated over $R\ncfrob$, e.g., by the image of $\Ext^i_R(R/I,R)$. Second, the structure morphism $\theta:\mathcal{F}_R(H^i_I(R))\to H^i_I(R)$ is an isomorphism. In general, we will refer to an $R\ncfrob$-module as \textit{unit} if its structure map is an isomorphism. Taking the inverse of the structure morphism, a unit $R\ncfrob$-module is precisely the data of an $F_R$-module, in the sense of Lyubeznik \cite{lyufmod}.

To say that a unit $R\ncfrob$-module is finitely generated over $R\ncfrob$ is precisely to say that it is an \textit{$F$-finite} $F_R$-module. Lyubeznik describes a number of strong finiteness properties possessed by a finitely generated unit $R\ncfrob$-module $M$. For example, $M$ has a finite set of associated primes, all unit $R\ncfrob$-submodules of $M$ are finitely generated over $R\ncfrob$ -- in fact, $F$-finite $F_R$-modules are an abelian category -- and for any ideal $I\subseteq R$, $i\geq 0$, the $R\ncfrob$-module structure induced by $M$ makes $H^i_I(M)$ finitely generated. 

The situation is more complicated if the base ring is no longer regular. Consider, for example, the case of a complete intersection ring $S$ cut out in codimension $c$ from a regular ring $R$ of prime characteristic $p>0$. Since $S$ is a homomorphic image of $R$, we may regard a Frobenius action on an $S$-module $M$ as either an $S\ncfrob$-module structure or as an $R\ncfrob$-module structure. We shall prefer the latter. We should not, however, expect the $R\ncfrob$-modules arising as local cohomology modules on $S$, say $H^i_I(S)$, to behave nearly as well as those of the form $H^i_I(R)$. Consider the set of associated primes. When $S$ is a hypersurface -- that is, in codimension $c=1$ -- Katzman showed that $\Ass\,H^i_I(S)$ can be an infinite set \cite{katzinf}. Singh and Swanson demonstrated that this behavior can occur when the hypersurface $S$ is a strongly $F$-regular UFD \cite{singhswan}. Thus, even in the presence of mild singularities, we should therefore expect that either $H^i_I(S)$ is non-finitely generated over $R\ncfrob$, that it is non-unit, or more likely, that both properties fail simultaneously

There are, nonetheless, positive results on the set of minimal primes that are known to hold in this setting. For example, in codimension $c=1$, the support of $H^i_I(S)$ is a Zariski closed set \cite{HochsterNunezBetancourt,KatzmanZhang}. It remains an open question whether the support of the local cohomology of an arbitrary complete intersection ring is always closed \cite{hochreview}.

One may try to use the comparatively well-behaved $R\ncfrob$-module structures on the local cohomology of $R$ as a means to control the local cohomology of $S$. For example, if $J\subseteq R$ is the defining ideal of $S$, then the sequence $0\to J\to R\to S\to 0$ is preserved by the natural Frobenius actions on every term. To control $H^i_I(S)$, Hochster and N\'{u}\~{n}ez-Betancourt consider the following long exact sequence of $R\ncfrob$-modules \cite{HochsterNunezBetancourt}.
\begin{equation}\label{les}
\cdots H^{i}_I(J)\xrightarrow{\alpha} H^{i}_I(R)\to H^i_I(S)\to H^{i+1}_I(J)\to\cdots
\end{equation}
Since $\alpha$ is $R\ncfrob$-linear and $H^i_I(R)$ is finitely generated over $R\ncfrob$, the cokernel of $\alpha$ is finitely generated as well. A finitely generated $R\ncfrob$-module has closed support. If $S$ is presented as a hypersurface, then $J$ is a principal ideal, and so $H^{i+1}_I(J)\cong H^{i+1}_I(R)$ has a finite set of associated primes. Thus, the support of the module $H^i_I(S)$ is closed. Unfortunately, in codimension $c\geq 2$, the set $\Ass\, H^{i+1}_I(J)$ may not necessarily be finite, and its finiteness may in some situations be equivalent to the finiteness of $\Ass\, H^i_I(S)$ \cite{LewisLocalCohomologyParameter}.

Katzman and Zhang give an independent proof of the hypersurface theorem by explicitly describing the supports of the kernel and cokernel of the multiplication by $f$ map on $H^i_I(R)$ \cite{KatzmanZhang}. When $S$ is presented as $R/f$, this is precisely the map $\alpha$ shown in sequence \eqref{les}.

If $\f=f_1,\ldots,f_c$ is a regular sequence of codimension $c$, then for any ideal $I\supseteq \f$ and any $i\geq 0$, there is a natural isomorphism $H^i_I(H^c_{\f}(R))\cong H^{i+c}_I(R)$. The multiplication map $H^i_I(R)\xrightarrow{f} H^i_I(R)$ is the map induced by $H^{i-1}_I(-)$ on $H^1_f(R)\xrightarrow{f} H^1_f(R)$. Instead of starting with the short exact sequence $0\to R\xrightarrow{f} R\to R/f\to 0$, one may therefore consider the long exact sequence of Katzman and Zhang as being induced by
\begin{equation}\label{ses1}
  0\to R/f\to H^1_{f}(R) \xrightarrow{f} H^1_{f}(R)\to 0,
\end{equation}
where $1\in R/f$ is sent to the \v{C}ech cohomology class $\lbrak 1/f\rbrak \in H^1_f(R)$. If one does not wish to compute the supports of the kernel and cokernel of the multiplication-by-$f$ map on $H^i_I(R)$, then we can still obtain the conclusion that $\Supp\, H^i_I(R/f)$ is closed by an argument analogous to Hochster and N\'{u}\~{n}ez-Betancourt so long as the short exact sequence \eqref{ses1} can be made $F$-stable. Indeed, if the right-most copy of $H^1_{f}(R)$ is equipped with the natural Frobenius action $F_{\text{nat}}$ induced by the Frobenius homomorphism of $R$, and the middle copy of $H^1_{f}(R)$ is equipped with the Frobenius action $f^{p-1}F_{\text{nat}}$, then one can verify that all maps in the sequence are $R\ncfrob$-linear.

For a nonzerodivisor $f\in R$, we consider the short exact sequence \eqref{ses1} to be the augmentation of a complex we shall denote by $\DDelta^\bullet_{f}(R)$, whose cohomology is $H^0(\DDelta^\bullet_{f}(R))=0$ and $H^1(\DDelta^\bullet_{f}(R))\simeq H^1_{f}(R)$
\[\DDelta^\bullet_{f}(R):\hspace{1.0em} 0\to R/f\to H^1_{f}(R) \to 0.\]

Our aim in this paper is to obtain a certain higher codimension analogue of the closed support theorems of Hochster and N\'{u}\~{n}ez-Betancourt or Katzman and Zhang by constructing a complex of $R\ncfrob$-modules analogous to $\DDelta^\bullet_f(R)$ in arbitrary codimension.

Let $\f=f_1, \dots, f_c$ be a regular sequence in a regular ring $R$ of prime characteristic $p>0$, corresponding to a complete intersection of codimension $c$. We will sometimes write $\f$ as shorthand for the ideal generated by $\f$, for example, in the notation $H^c_{\f}(R)$ or $R/\f$, though in context, this should not cause any confusion.

Let $f=\prod_{i=1}^c f_i$. As in our discussion on the hypersurface setting, we may embed $R/\f$ into $H^c_{\f}(R)$ by sending $1\in R/\f$ to the \v{C}ech cohomology class $\lbrak 1/f\rbrak$. This embedding is preserved by the Frobenius action $F_{\text{fed}}:=f^{p-1} F_{\text{nat}}$, where we use $F_{\text{nat}}$ to denote the natural action on $H^c_{\fR}(R)$ induced by the Frobenius homomorphism of $R$. We refer to $F_{\text{fed}}$ as the \textit{Fedder action} on $H^c_{\f}(R)$ for reasons that will be described in Section 2. When confusion is possible, we use subscripts $H^c_{\f}(R)_{\text{fed}}$ and $H^c_{\f}(R)_{\text{nat}}$ to distinguish between the $R\ncfrob$-modules obtained by equipping $H^c_{\f}(R)$ with the Fedder action or with the natural action, respectively.

Letting $Q_{\f}$ denote the cokernel of $R/\f\hookrightarrow H^c_{\f}(R)$, we obtain a short exact sequence of $R\ncfrob$-modules,
\[
0\to R/\f \to H^c_{\f}(R)_{\text{fed}}\to Q_{\f}\to 0
\]
For an ideal $I\supseteq \f$, one might hope to control $H^i_I(R/\f)$ via the following long exact sequence, in a manner analogous to Hochster and N\'{u}\~{n}ez-Betancourt.
\begin{equation}\label{les2}
\cdots H^{i-1}_I(H^c_{\f}(R)_{\text{fed}})\xrightarrow{\alpha} H^{i-1}_I(Q_{\f}) \to H^i_I(R/\f)\to H^i_I(H^c_{\f}(R)_{\text{fed}}) \to \cdots
\end{equation}
It is certainly true that $H^i_I(H^c_{\f}(R)_{\text{fed}})$ is isomorphic as an $R$-module to $H^{i+c}_I(R)$, and hence, has a finite set of associated primes. However, significant caution is warranted regarding the cokernel of $\alpha$.

The $R\ncfrob$-module $H^c_{\f}(R)_{\text{fed}}$ is finitely generated -- in fact, it is cyclic, generated by the \v{C}ech cohomology class $\lbrak 1/f^2\rbrak$. However, neither $H^c_{\f}(R)_{\text{fed}}$ nor $Q_{\f}$ are unit $R\ncfrob$-modules. In both cases, the structure morphism is a surjective map with a nontrivial kernel that we compute explicitly in Section 2. So, while Lyubeznik's theorem implies that the iterated local cohomology $H^i_I(H^c_{\f}(R)_{\text{nat}})$ is finitely generated over $R\ncfrob$, we cannot necessarily expect finite generation for $H^i_I(H^c_{\f}(R)_{\text{fed}})$ or $H^i_I(Q_{\f})$. For this reason, we require an alternative to the direct comparison between the local cohomologies of $R/\f$ and $H^c_{\f}(R)_{\text{fed}}$ that is shown in sequence \eqref{les2}. In particular, we would like a complex that allows us to take advantage of both the Fedder action's compatibility with embeddings and the natural action's unit property, as we were able to in the hypersurface case.

Assume that $\f$ is permutable. Let $[c]:=\{1,\ldots,c\}$, and for a subset $T\subseteq [c]$, let $\f_T$ denote the subsequence of $\f$ indexed by the elements of $T$. Thinking in terms of the generating \v{C}ech cohomology classes $\lbrak f_1^{a_1}\cdots f_c^{a_c} \rbrak$ for $a_1,\ldots,a_c\leq -1$, the annihilator in $H^c_{\fR}(R)$ of the subsequence $\f_T$ is generated by those classes with $a_j=-1$ for all $j\in T$, and is isomorphic to a copy of $H^{c-|T|}_{\f_{[c]-T}}(R/\f_T)$. Every such annihilator is preserved by the Fedder action of $H^c_{\f}(R)$.

If we represent the inverse monomials $\lbrak f_1^{a_1}\cdots f_c^{a_c} \rbrak$ for $a_1,\ldots,a_c\leq -1$ as the orthant in the lattice $\Z^c$ consisting of all points $(a_1,\ldots,a_c)$ with $a_1,\ldots,a_c\leq -1$, then the annihilators of subsequences $\f_T$ correspond to exterior faces of the orthant. This visibly simplicial structure suggests the arrangement of the local cohomologies $H^{c-|T|}_{\f_{[c]-T}}(R/\f_T)$ for $T\subseteq [c]$ into a complex $\DDelta_{\fR}^\bullet(R)$ of length $c$. The degree $i$ term $\DDelta^i_{\fR}(R)$ is the direct sum of the annihilators of ideals $\f_T$ for $|T|=c-i$, and the differentials are given by direct sums of signed inclusion maps (similar to Koszul or \v{C}ech complexes). At one end of the complex, we have $\DDelta^0_{\fR}(R)=R/{\fR}$, and at the other, we have $\DDelta^c_{\fR}(R)=H^c_{\fR}(R)$. Each term may be equipped with a Fedder action in order to make all differentials in the complex $R\ncfrob$-linear.

Consider, for example, the codimension $2$ setting. Given a regular sequence $f,g\in R$, the complex $\DDelta^\bullet_{f,g}(R)$, shown below with its augmentation map, has the form
\[0\to R/(f,g)R\to H^1_{f}(R/g)\oplus H^1_{g}(R/f)\to H^2_{f,g}(R)_{\text{fed}}\xrightarrow{fg} H^2_{f,g}(R)_{\text{nat}}\to 0\]

The Fedder action on $H^1_f(R/g)$ is $f^{p-1}F_{\text{nat}}$ and the Fedder action on $H^1_{g}(R/f)$ is $g^{p-1}F_{\text{nat}}$. With the Fedder and natural actions on the left and right copy of $H^c_{\f}(R)$, respectively, all maps in this complex are $R\ncfrob$-linear. The cohomology of $\DDelta^\bullet_{f,g}(R)$ is straightforward to compute directly.

In Section 4, we compute the cohomology of the $\DDelta^\bullet_{\fR}(R)$ complex in general, showing that $H^i(\DDelta_{\fR}^\bullet(R))=0$ for $0\leq i<c$, and that $H^c(\DDelta_{\fR}^\bullet(R))\simeq H^c_{\fR}(R)$. The induced Frobenius action on the augmentation $H^c(\DDelta_{\fR}^\bullet(R))$ is the natural action induced by the Frobenius homomorphism of $R$ -- it is not the Fedder action. As an application of the exactness of the augmented $\DDelta^\bullet_{\fR}(R)$ complex, we show in Section 5 that if $I$ is an ideal containing ${\fR}$ that satisfies the vanishing condition $H^i_I(R)=0$ for $\text{ht}(I) < i < \text{ht}(I)+c$, then the module $H^{\text{ht}(I/{\fR})+c}_{I/{\fR}}(R/{\fR})$ has Zariski closed support.

%%%%%%%%%%%%%%%%%%%%%%%%%%%%%%%%%%%%%%%%%%%%%%%%%%%%%%%%%%%%%%%%%%%%%%%%%%%%%%%%%%%%%%%%%%%%%%%%%%%
%%%%%%%%%%%%%%%%%%%%%%%%%%%%%%%%%%%%%%%%%%%%%%%%%%%%%%%%%%%%%%%%%%%%%%%%%%%%%%%%%%%%%%%%%%%%%%%%%%%
% THE FEDDER ACTION
%%%%%%%%%%%%%%%%%%%%%%%%%%%%%%%%%%%%%%%%%%%%%%%%%%%%%%%%%%%%%%%%%%%%%%%%%%%%%%%%%%%%%%%%%%%%%%%%%%%
%%%%%%%%%%%%%%%%%%%%%%%%%%%%%%%%%%%%%%%%%%%%%%%%%%%%%%%%%%%%%%%%%%%%%%%%%%%%%%%%%%%%%%%%%%%%%%%%%%%
\section{The Fedder Action}
Suppose that $R$ is a Noetherian ring of characteristic $p > 0$ and $J \subset R$ is an ideal. Recall the notation $J^{[p]}$ for the expansion of $J$ under the Frobenius map $F: R \to R$, $F(r) = r^p$. Explicitly, $J^{[p]}$ is the ideal of $R$ generated by $\{f^p \,:\, f \in J\}$. Similar, $J^{[p^e]}$ is the ideal generated by $\{f^{p^e} \,:\, f \in J\}$ for any $e \ge 1$. We denote by $F^e_*R$ the codomain of $F^e: R \to R$, viewed as a $R$-algebra. An element $r\in R$ acts on $s \in F^e_*R$ via $r\cdot s=r^{p^e}s$.

The ring $R$ is \textit{$F$-finite} if $F:R\to R$ is a finite map. This property is preserved by localization, homomorphic images, and finite type extension.

\subsection{The Fedder action associated with a Gorenstein local ring}

To motivate the definition of the Fedder action, we consider the setting of an $F$-finite Gorenstein local ring. The main property we require is as follows.

\begin{lemma}\label{principal Cartier algebra}
  Let $(S, \m)$ be an $F$-finite Gorenstein local ring of characteristic $p > 0$. For some non-zero $T: F_*^e S \to S$, we have an isomorphism of $F^e_*S$-modules. 
  \[ \Hom_S(F_*^eS, S) \cong T \cdot (F_*^e S) \]
\end{lemma}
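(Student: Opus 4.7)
The plan is to recognize $\Hom_S(F_*^e S, S)$ as the canonical module of the ring $F_*^e S$, and then exploit the observation that $F_*^e S$ is itself Gorenstein local, so its canonical module is free of rank one.

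First I would spell out the $F_*^e S$-module structure on $\Hom_S(F_*^e S, S)$ given by premultiplication: for $t \in F_*^e S$ and $\varphi \in \Hom_S(F_*^e S, S)$, set $(t \cdot \varphi)(s) := \varphi(ts)$. A short check shows $t \cdot \varphi$ remains $S$-linear, since $r \in S$ acts on $F_*^e S$ by the scalar $r^{p^e}$, which commutes with multiplication by $t$. Once this action is in place, the lemma amounts to producing a generator $T$, since then $s \mapsto s \cdot T$ yields the claimed isomorphism $F_*^e S \xrightarrow{\sim} \Hom_S(F_*^e S, S)$ of $F_*^e S$-modules.

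Next I would invoke the standard duality statement that for a finite ring map $A \to B$ between Cohen-Macaulay local rings with $A$ admitting a canonical module $\omega_A$, the $B$-module $\Hom_A(B, \omega_A)$, equipped with the premultiplication action, is a canonical module for $B$ (see, e.g., Bruns-Herzog \S3.3). The $F$-finite hypothesis ensures $F^e : S \to F_*^e S$ is finite, and since $S$ is Gorenstein, $\omega_S \cong S$. Hence $\Hom_S(F_*^e S, S)$ is a canonical module for $F_*^e S$. But $F_*^e S$ is literally $S$ as a ring (only the $S$-algebra structure has been twisted), so $F_*^e S$ is itself Gorenstein local, and its canonical module is isomorphic to $F_*^e S$ as an $F_*^e S$-module. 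Taking $T$ to be the image of $1$ under any such isomorphism completes the proof.

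The main ingredient, and the potential obstacle if one wanted a self-contained treatment, is the duality formula $\omega_B \cong \Hom_A(B, \omega_A)$ for finite maps. An alternative path would be to pass to the $\m$-adic completion and use Cohen's structure theorem to write $\widehat{S} = A/I$ for a complete regular local ring $A$ and a regular sequence $I$, transporting the well-known rank-one freeness of $\Hom_A(F_*^e A, A)$ across the quotient by means of the Fedder-type identification $\Hom_A(F_*^e(A/I), A/I) \cong (I^{[p^e]} : I)/I^{[p^e]} \cdot F_*^e(A/I)$; but invoking the canonical module formalism is substantially cleaner and yields the result in one step.
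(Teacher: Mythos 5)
Your proposal is correct and follows essentially the same route as the paper: both pass through the chain $\Hom_S(F_*^e S, S) \cong \Hom_S(F_*^e S, \omega_S) \cong \omega_{F_*^e S} \cong F_*^e S$, using the duality $\omega_B \cong \Hom_A(B, \omega_A)$ for finite local Cohen--Macaulay maps together with the observation that $F_*^e S$ is Gorenstein because it is isomorphic to $S$ as a ring. You simply spell out the premultiplication module structure and cite the duality formula more explicitly than the paper does.
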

\begin{proof}
  Since $F^e_*S$ is finitely generated as an $S$-module, and isomorphic to $S$ as a ring, we have a finite, local extension of Gorenstein rings $(S, \m) \subset (F_*^eS, F_*^e\m)$. Thus, $\Hom_S(F_*^eS, S) \cong \Hom_S(F_*^eS, \omega_S) \cong \omega_{F_*^eS} \cong F_*^eS$ as $F^e_*S$-modules. 
\end{proof}

When $S=R/J$ is Gorenstein for $R$ an $F$-finite regular local ring, the fact that $\Hom_S(F_*S,S)$ is principal leads to the following well-known description of $(J^{[p]}:J)$, which plays a key role in the proof of Fedder's criterion \cite{Fedder83}.

\begin{lemma}
  \label{principal Fedder socle}
  Suppose $R$ is regular and $J \subset R$ is an ideal with $R/J$ Gorenstein. Then for some $g \in R$, $(J^{[p]} :J) = gR + J^{[p]}$, and $(J^{[p^e]}:J^{[p^{e-1}]}) = g^{p^{e-1}}R + J^{[p^e]}$ for all $e \ge 1$. 
\end{lemma}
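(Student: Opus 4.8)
The plan is to identify $(J^{[p^e]}:J)/J^{[p^e]}$ with the module of Frobenius-twisted endomorphisms $\Hom_{R/J}(F^e_*(R/J), R/J)$, then invoke \Cref{principal Cartier algebra} to see that the latter is cyclic -- in fact free of rank one -- over $F^e_*(R/J)$, and finally propagate the $e = 1$ case to all $e$ by flat base change along Frobenius. Assume, as we may, that $R$ is local and $F$-finite (the setting relevant here). Since $R$ is regular and $F$-finite, $F^e_*R$ is a finitely generated free $R$-module, so $\Hom_R(F^e_*R, R)$ is free of rank one over $F^e_*R$; fix a generator $\Phi_e$, so that every $R$-linear map $F^e_*R \to R$ has the form $\Phi_e \cdot u$, where $(\Phi_e \cdot u)(v) := \Phi_e(uv)$, for a unique $u \in F^e_*R$. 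The computational input, due to Fedder \cite{Fedder83}, is that for any ideals $\mathfrak{a}, \mathfrak{b} \subseteq R$ one has $\Phi_e(F^e_*\mathfrak{a}) \subseteq \mathfrak{b}$ if and only if $\mathfrak{a} \subseteq \mathfrak{b}^{[p^e]}$; this reflects the freeness of $\Hom_R(F^e_*R, R)$ together with flatness of the Frobenius.

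First I would set up the Fedder correspondence. Precomposition with the surjection $F^e_*R \twoheadrightarrow F^e_*(R/J)$, whose kernel is $F^e_*J$, identifies $\Hom_{R/J}(F^e_*(R/J), R/J)$ with the set of homomorphisms $F^e_*R \to R/J$ that kill $F^e_*J$; since $F^e_*R$ is $R$-projective, each such homomorphism lifts to some $\Phi_e \cdot u \colon F^e_*R \to R$, and $\Phi_e \cdot u$ descends to a map $F^e_*(R/J) \to R/J$ exactly when $\Phi_e(F^e_*(uJ)) = (\Phi_e \cdot u)(F^e_*J) \subseteq J$, that is, by the displayed equivalence, when $uJ \subseteq J^{[p^e]}$, i.e.\ $u \in (J^{[p^e]} : J)$; moreover two such $u, u'$ induce the same descended map precisely when $u - u' \in J^{[p^e]}$. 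This yields an isomorphism of $F^e_*(R/J)$-modules
\[ \Hom_{R/J}\bigl(F^e_*(R/J),\, R/J\bigr) \;\cong\; (J^{[p^e]} : J)/J^{[p^e]}. \]
Now \Cref{principal Cartier algebra}, applied to the $F$-finite Gorenstein local ring $R/J$, says the left-hand side is free of rank one over $F^e_*(R/J)$; hence $(J^{[p^e]} : J)/J^{[p^e]}$ is a cyclic $R/J$-module. Taking $e = 1$ and choosing $g \in (J^{[p]} : J)$ whose residue class generates it gives $(J^{[p]} : J) = gR + J^{[p]}$.

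It remains to obtain the general formula from the case $e = 1$. Since $R$ is regular the Frobenius $F$ of $R$ is flat, hence so is $F^{e-1}$, so colons of finitely generated ideals commute with expansion along $F^{e-1}$: $(I^{[p^{e-1}]} : K^{[p^{e-1}]}) = (I : K)^{[p^{e-1}]}$. Applying this with $I = J^{[p]}$ and $K = J$, and using $(aR + K)^{[q]} = a^q R + K^{[q]}$ whenever $q$ is a power of $p$ (as then $x \mapsto x^q$ is additive), we get
\[ \bigl(J^{[p^e]} : J^{[p^{e-1}]}\bigr) = \bigl(J^{[p]} : J\bigr)^{[p^{e-1}]} = \bigl(gR + J^{[p]}\bigr)^{[p^{e-1}]} = g^{p^{e-1}}R + J^{[p^e]}, \]
as asserted (the case $e=1$ recovering the first claim). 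The genuinely nontrivial step is the Fedder correspondence of the second paragraph -- both the inclusion/colon equivalence for $\Phi_e$ and the fact that twisted endomorphisms of $R/J$ lift to $R$ -- which is where regularity and $F$-finiteness of $R$ are used; once these are in hand, \Cref{principal Cartier algebra} and flatness of Frobenius finish the argument formally.
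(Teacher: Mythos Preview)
Your proof is correct and follows essentially the same route as the paper: establish the Fedder correspondence $(J^{[p]}:J)/J^{[p]} \cong \Hom_{R/J}(F_*(R/J),R/J)$, apply \Cref{principal Cartier algebra} to see it is principal, and then use flatness of Frobenius to pass from $e=1$ to general $e$. The only difference is that you spell out the Fedder correspondence in detail (and set it up for all $e$, though you only need $e=1$), whereas the paper simply cites \cite{Fedder83} for that isomorphism.
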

\begin{proof}
The assumption that $R$ is regular implies that $F_*R$ is flat over $R$, hence $(J^{[p^e]}:_R J^{[p^{e-1}]}) = (J^{[p]} : J)^{[p^{e-1}]}$ for all $e \ge 1$. Thus, it suffices to prove $(J^{[p]}: J) = gR + J^{[p]}$. Indeed, if we fix a generator $T$ for $\Hom_R(F_*R, R)$ over $F_*R$, the map
  \[ (J^{[p]}:J)/J^{[p]} \xrightarrow{\sim} \Hom_{R/J}(F_*(R/J), (R/J)) \]
  sending $(r + J^{[p]})$ to $(T \cdot r) + J$ is an isomorphism \cite{Fedder83}. By \cref{principal Cartier algebra}, we know $\Hom_{R/J}(F_*(R/J), R/J)$ is principal over $F_*(R/J)$, say via $(T \cdot g) + J$ for $g \in (J^{[p]} : J)$. We conclude $(J^{[p]} : J) = gR + J^{[p]}$. 
\end{proof}

Let $R/J$ be a Gorenstein quotient with $R$ regular, and fix a generator $g$ for $(J^{[p]}: J)$. Lemma \ref{principal Fedder socle} allows us to define a directed system

\begin{equation}\label{dirsys}
\begin{tikzcd}
  0 \ar[r] & R/J \ar[r, "g"] & R/J^{[p]} \ar[r, "g^p"] & R/J^{[p^2]} \ar[r, "g^{p^2}"] & R/J^{[p^3]} \ar[r] & \cdots 
\end{tikzcd}
\end{equation}
whose transition maps are injective.

\begin{definition}
Let $R$ be a ring of prime characteristic $p>0$ and let $M$ be an $R$-module. A \textit{Frobenius action} on $M$ is an additive map $\beta:M\to M$ such that $\beta(r\eta) = r^p\beta(\eta)$ for all $r\in R$ and all $\eta \in M$. The Frobenius homomorphism $F:R\to R$ is the \textit{natural action} on $R$. If $M$ and $N$ are $R$-modules equipped with Frobenius actions $\alpha$ and $\beta$, respectively, then an $R$-linear map $h:M\to N$ is called \textit{Frobenius stable} if $h\circ \beta = \alpha\circ h$. If $h:M\to N$ is Frobenius stable, then both $\text{Ker}(h)$ and $\text{Coker}(h)$ inherit an \textit{induced action} from the actions on $M$ and $N$, respectively.
\end{definition}

Let $M = \varinjlim_e (R/J^{[p^e]}, g^{p^e})$ denote the direct limit of the system. The embedding $R/J\hookrightarrow M$ can be made Frobenius stable with respect to the natural action of $R/J$. Specifically, let $M$ be equipped with the action $\varphi:M\to M$ described by $gF:R/J^{[p^e]}\to R/J^{[p^{e+1}]}$ at the level of its defining directed system. The compatibility with the natural action $F:R/J\to R/J$ is shown below.
\[
\begin{tikzcd}
  & 0 \ar[r] & R/J\ar[dl,"F"'] \ar[r, "g"]\ar[d, "gF"] & R/J^{[p]} \ar[r, "g^p"]\ar[d, "gF"] & R/J^{[p^2]} \ar[r, "g^{p^2}"]\ar[d, "gF"] & R/J^{[p^3]} \ar[r]\ar[d, "gF"] & \cdots\ar[r] & M\ar[d,"\varphi"]\\
  0 \ar[r] & R/J \ar[r, "g"] & R/J^{[p]} \ar[r, "g^p"] & R/J^{[p^2]} \ar[r, "g^{p^2}"] & R/J^{[p^3]} \ar[r,"g^{p^3}"] & R/J^{[p^4]} \ar[r] & \cdots\ar[r] & M
\end{tikzcd}
\]

We refer to the resulting action $\varphi:M\to M$ on $M$ as the {\em Fedder action}.

%%%%%%%%%%%%%%%%%%%%%%%%%%%%%%%%%%%%%%%%%%%%%%%%%%%%%%%%%%%%%%%%%%%%%%%%%%%%%%%%%%%%%%%%%%%%%%%%%%%
\subsection{The Fedder action associated with a regular sequence}
For a regular sequence $\f=f_1,\ldots,f_c$, by an abuse of notation, we will use $\f^{[t]}$ to denote the sequence $f_1^t,\ldots,f_c^t$ regardless of whether $t$ is a power of the characteristic. Let $f=\prod_{i=1}^c f_i$.

When the ideal $J$ in the directed system \eqref{dirsys} is generated by a regular sequence $\f=f_1,\ldots,f_c$, the Fedder socle of $R/{\f^{[p]}}$ has a clear choice of generator: $f^{p-1}$. Moreover, the direct limit \eqref{dirsys} is identifiable as $H^c_{\fR}(R)$. For $q=p^e$, the action sending $r+\f^{[q]}\in R/\f^{[q]}$ to $f^{p-1}r^p+\f^{[qp]}\in R/\f^{[qp]}$ at the level of the directed system $(R/\f^{[q]},f^{qp-q})_{e=0}^\infty$ has the form
\[\lbrak\frac{r}{f^{q}} \rbrak \mapsto \lbrak\frac{r^pf^{p-1}}{f^{qp}} \rbrak \]
on \v{C}ech classes in $H^c_{\f}(R)$.

While our primary motivation is the case in which $R$ is a regular ring, the colon properties 
\[(\f^{[b]}:f^{b-a})=\f^{[a]}\hspace{1.0em}\text{and}\hspace{1.0em}(\f^{[b]}:\f^{[a]})=f^{b-a}R+\f^{[b]}\]
for two positive integers $b>a$ hold for an arbitrary regular sequence $\f$ in a Noetherian ring. Many properties of the constructions that follow therefore work in greater generality than the motivating setting of the previous section. 
\begin{definition}
Let $R$ be a Noetherian ring of prime characteristic $p>0$, let $\f=f_1,\ldots,f_c\in R$ be a regular sequence of codimension $c$, and let $f=\prod_{i=1}^c f_i$. The \textit{natural action} on $H^c_{\f}(R)$, denoted $F_{\text{nat}}$, is the map $H^c_{\f}(F):H^c_{\f}(R)\to F_*H^c_{\f}(R)$ induced by $H^c_{\f}(-)$ on the Frobenius homomorphism $F:R\to F_*R$, via the natural identification of $R$-modules $H^c_{\f}(F_*R)=F_* H^c_{\f^{[p]}}(R)=F_* H^c_{\f}(R)$. The \textit{Fedder action with respect to $\f$} on $H^c_{\f}(R)$ -- or simply \textit{the Fedder action}, if the sequence $\f$ is understood -- is defined by $F_{\text{fed}}:=f^{p-1} F_{\text{nat}}$. When there is risk of confusion, we write $H^c_{\f}(R)_{\text{nat}}$ and $H^c_{\f}(R)_{\text{fed}}$ to denote $H^c_{\f}(R)$ equipped with the natural action and the Fedder action, respectively.
\end{definition}

Representing elements of $H^c_{\f}(R)$ as \v{C}ech cohomology classes with respect to the sequence $\f$, we have for $r\in R$ and $q=p^e$,
\[F_{\text{nat}}:\lbrak\frac{r}{f^{q}} \rbrak \mapsto \lbrak\frac{r^p}{f^{{qp}}}\rbrak\]
If $R\ncfrob$ denotes the noncommutative ring $R\{F\}/(r^pF-Fr\,|\, r\in R)$, then the class $\lbrak 1/f\rbrak$ generates $H^c_{\f}(R)_{\text{nat}}$ as an $R\ncfrob$-module. It is still true that $H^c_{\f}(R)_{\text{fed}}$ is a cyclic $R\ncfrob$-module, but we require a different generator. This is because,
\[ F_{\text{fed}}:\lbrak\frac{r}{f} \rbrak \mapsto \lbrak\frac{r^p}{f} \rbrak \]
for any $r\in R$. That is, the image of the map $R/\f\to H^c_{\f}(R)$ sending $1\mapsto \lbrak 1/f\rbrak$ is preserved by the Fedder action. Notice that for each $q=p^e$ we have
\[F_{\text{fed}}:\lbrak\frac{r}{f^{q+1}} \rbrak \mapsto \lbrak\frac{r^p}{f^{{qp+1}}} \rbrak \]
and thus, the class $\lbrak1/f^2\rbrak$ generates $H^c_{\f}(R)_{\text{fed}}$ over $R\ncfrob$.

Perhaps the most useful property of the Fedder action is its compatibility with embeddings of annihilators of subsequences, in the sense of the following proposition. This compatibility was observed in \cite{CantonGradedCIs}, essentially as the result of applying $H^*_{\f}$ to the Koszul complex $K^\bullet(\g;R)$.
\begin{proposition}\label{fedemb}
  Let $R$ be a Noetherian ring of prime characteristic $p>0$, let $g_1,\ldots,g_t,f_1,\ldots,f_c\in R$ be a regular sequence, and write $\g=g_1,\ldots,g_t$, $\f=f_1,\ldots,f_c,$, $g=\prod_{i=1}^t g_i$ and $f=\prod_{i=1}^c f_i$. Consider $H^c_{\f}(R/\g)$ and $H^{t+c}_{\g,\f}(R)$ as $R\ncfrob$-modules via the Fedder actions with respect to $\f$ and $\g,\f$, respectively. There is an $R\ncfrob$-linear injection
  \[
  H^c_{\f}(R/\g)\hookrightarrow H^{t+c}_{\g,\f}(R)
  \]
whose image is the annihilator $(0:_{H^{t+c}_{\g,\f}(R)} \g)$.
\end{proposition}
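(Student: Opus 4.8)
The plan is to work with \v{C}ech cohomology classes throughout, tracking the individual exponents of the $f_i$ and $g_j$ rather than the compressed notation $\lbrak \frac{r}{f^q}\rbrak$, as this makes both the module map and the Frobenius compatibility completely explicit. Recall $f=\prod_{i=1}^{c}f_i$ and $g=\prod_{j=1}^{t}g_j$; since $g_1,\dots,g_t,f_1,\dots,f_c$ is a regular sequence in $R$, the sequence $\f$ is a regular sequence on $R/\g$, so every module and action named in the statement is defined. Every element of $H^c_{\f}(R/\g)$ is a single class $\lbrak \frac{\bar r}{f_1^{b_1}\cdots f_c^{b_c}}\rbrak$ with $\bar r\in R/\g$ and $b_i\geq 1$, and every element of $H^{t+c}_{\g,\f}(R)$ is a single class $\lbrak \frac{r}{g_1^{a_1}\cdots g_t^{a_t}f_1^{b_1}\cdots f_c^{b_c}}\rbrak$. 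The map in question is
\[
\lbrak \tfrac{\bar r}{f_1^{b_1}\cdots f_c^{b_c}}\rbrak \;\longmapsto\; \lbrak \tfrac{r}{g_1\cdots g_t\, f_1^{b_1}\cdots f_c^{b_c}}\rbrak ,
\]
where $r\in R$ is any lift of $\bar r$; that this is a well-defined $R$-linear homomorphism is routine, using in each verification that a \v{C}ech class vanishes as soon as some generator is missing from its denominator.

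For the image and for injectivity I would invoke the standard fact that, for a regular sequence $x_1,\dots,x_n$ in a Noetherian ring $A$, one has $\lbrak \frac{s}{x_1^{e_1}\cdots x_n^{e_n}}\rbrak = 0$ in $H^{n}_{(x_1,\dots,x_n)}(A)$ if and only if $s\in(x_1^{e_1},\dots,x_n^{e_n})$. Multiplying $\lbrak \frac{r}{g_1\cdots g_t\, f_1^{b_1}\cdots f_c^{b_c}}\rbrak$ by any $g_j$ lowers its $g_j$-exponent to $0$, so the image is contained in $(0:_{H^{t+c}_{\g,\f}(R)}\g)$. Conversely, given $\eta$ in that annihilator, write it as a single \v{C}ech fraction; the relations $g_j\eta = 0$ together with the vanishing criterion let one rewrite the numerator, modulo classes that are already zero, so as to lower the $g_j$-exponent to $1$ without raising any other exponent, and doing this for each $j$ in turn displays $\eta$ in the form $\lbrak \frac{r}{g_1\cdots g_t\, f_1^{b_1}\cdots f_c^{b_c}}\rbrak$, hence in the image. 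Injectivity is the same criterion read in $R$ against $R/\g$: if $\lbrak \frac{r}{g_1\cdots g_t\, f_1^{b_1}\cdots f_c^{b_c}}\rbrak = 0$ then $r\in(g_1,\dots,g_t,f_1^{b_1},\dots,f_c^{b_c})$, so $\bar r\in(f_1^{b_1},\dots,f_c^{b_c})(R/\g)$ and $\lbrak \frac{\bar r}{f_1^{b_1}\cdots f_c^{b_c}}\rbrak = 0$.

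Finally I would check $R\ncfrob$-linearity, that is, Frobenius stability against the two Fedder actions, which is where the normalization of $F_{\text{fed}}$ does the work. On $H^c_{\f}(R/\g)$ the Fedder action with respect to $\f$ is $f^{p-1}F_{\text{nat}}$, carrying $\lbrak \frac{\bar r}{f_1^{b_1}\cdots f_c^{b_c}}\rbrak$ to $\lbrak \frac{\bar r^{p}f_1^{p-1}\cdots f_c^{p-1}}{f_1^{b_1 p}\cdots f_c^{b_c p}}\rbrak$; on $H^{t+c}_{\g,\f}(R)$ the Fedder action with respect to $\g,\f$ is $(gf)^{p-1}F_{\text{nat}}$, because $gf$ is the product of all terms of the sequence. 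Applying the latter to the image $\lbrak \frac{r}{g_1\cdots g_t\, f_1^{b_1}\cdots f_c^{b_c}}\rbrak$ produces $\lbrak \frac{r^{p}(g_1\cdots g_t)^{p-1}(f_1\cdots f_c)^{p-1}}{(g_1\cdots g_t)^{p}f_1^{b_1 p}\cdots f_c^{b_c p}}\rbrak$, and the factor $(g_1\cdots g_t)^{p-1}$ cancels, leaving exactly the image of $f^{p-1}F_{\text{nat}}\lbrak \frac{\bar r}{f_1^{b_1}\cdots f_c^{b_c}}\rbrak$; so the map is Frobenius stable, hence $R\ncfrob$-linear. I expect the main obstacle to be the middle step — pinning the image down to be precisely $(0:_{H^{t+c}_{\g,\f}(R)}\g)$ and verifying injectivity — since that is exactly where the regular-sequence hypothesis is used essentially, through the vanishing criterion for top \v{C}ech cohomology; the Frobenius computation, though it is the conceptual heart of the statement, collapses to the single cancellation above. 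Alternatively one can obtain this map and its properties by applying $H^{\bullet}_{\f}(-)$ to the Koszul complex $K^{\bullet}(\g;R)$, as in \cite{CantonGradedCIs}, and collapsing the resulting spectral sequence via $H^i_{\f}(R)=0$ for $i<c$; I prefer the \v{C}ech-class argument because it keeps the Frobenius bookkeeping transparent.
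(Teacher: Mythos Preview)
Your proof is correct and essentially the same as the paper's: both define the same explicit map, use the regular-sequence colon identities (your ``vanishing criterion'') for injectivity and the image characterization, and verify Frobenius stability by the identical $(g_1\cdots g_t)^{p-1}$ cancellation. The only difference is bookkeeping---the paper works with the directed systems $R/(\g,\f^{[a]})\xrightarrow{\,g^{a-1}\,}R/(\g^{[a]},\f^{[a]})$ at a uniform exponent $a$ and handles the image in one step via $(\g^{[a]},\f^{[a]}):\g=g^{a-1}R+(\g^{[a]},\f^{[a]})$, whereas you track individual \v{C}ech exponents and lower the $g_j$-exponents one at a time.
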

\begin{proof}
Since $(\g^{[a]},\f^{[a]}):g^{a-1}= (\g,\f^{[a]})$ for all $a\geq 1$, multiplication by $g^{a-1}$ induces a well-defined injection $\phi_a:R/(\g,\f^{[a]})\xrightarrow{g^{a-1}}R/(\g^{[a]},\f^{[a]})$, and since $(\g^{[a]},\f^{[a]}):\g=g^{a-1}R+(\g^{[a]},\f^{[a]})$, the image of $\phi_a$ is precisely $(0:_{R/(\g^{[a]},\f^{[a]})}\g)$. The maps $\phi_a$ form a map of directed systems $(R/(\g,\f^{[a]}),f)_{a=1}^\infty$ to $(R/(\g^{[a]},\f^{[a]}),gf)_{a=1}^\infty$ via
\[
\begin{tikzcd}
  R/(\g,\f^{[a]})\ar[r,"g^{a-1}"]\ar[d,"f^{b-a}"] & R/(\g^{[a]},\f^{[a]})\ar[d,"(gf)^{b-a}"]\\
  R/(\g,\f^{[b]})\ar[r,"g^{b-1}"] & R/(\g^{[b]},\f^{[b]})
\end{tikzcd}
\]
On the direct limits, this produces an injection $\phi:H^{c}_{\f}(R/\g)\hookrightarrow H^{t+c}_{\g,\f}(R)$, whose image is precisely $(0:_{H^{t+c}_{\g,\f}(R)}\g)$. For each $q=p^e$, the Fedder action with respect to $\f$ sends the class $r+(\g,\f^{[q]})\in R/(\g,\f^{[q]})$ to $f^{p-1}r^p+(\g,\f^{[qp]})\in R/(\g,\f^{[qp]})$, which is sent by $\phi_{qp}$ to $g^{qp-1}(f^{p-1}r^p+(\g^{[qp]},\f^{[qp]})\in R/(\g^{[qp]},\f^{[qp]})$. On the other hand, $\phi_q$ sends $r+(\g,\f^{[q]})$ to $g^{q-1}r+(\g^{[q]},\f^{[q]})$, and the Fedder action with respect to $\g,\f$ sends $g^{q-1}r+(\g^{[q]},\f^{[q]})$ to
\[
(gf)^{p-1}\left(g^{q-1}r\right)^p+(\g^{[qp]},\f^{[qp]}) = g^{qp-1}(f^{p-1}r^p)+(\g^{[qp]},\f^{[qp]}),
\]
as desired.
\end{proof}

\subsection{The structure morphism of the Fedder action}
A Frobenius action is equivalent data to specifying a structure morphism in the following sense.

\begin{definition}
Let $R$ be a ring of prime characteristic $p>0$ and let $M$ be an $R$-module equipped with a Frobenius action $\beta:M\to M$. If the codomain of $\beta$ is regarded as an $R$-module via restriction of scalars along the Frobenius homomorphism of $R$, then the resulting map $\beta:M\to F_* M$ is $R$ linear. This map induces an $F_*R$-linear map $\theta: F_*R\otimes_R M\to F_*M$ sending $s\otimes \eta \mapsto s\beta(\eta)$ for $s\in F_*R$ and $\eta \in M$. If $\mathcal{F}_R(-)$ denotes the base change functor along the Frobenius homomorphism of $R$, then $\theta$ may be regarded as an $R$-linear map $\theta:\mathcal{F}_R(M)\to M$. We call this map the \textit{structure morphism} of the Frobenius action $\beta$. Call $\beta$ a \textit{unit} action (using the terminology of \cite{blickle2005local}) if the corresponding structure morphism $\theta:\mathcal{F}_R(M)\to M$ is an isomorphism.
\end{definition}

Given an arbitrary $R$-linear map $\theta:\mathcal{F}_R(M)\to M$, which we may consider to be an $F_*R$-linear map $F_*R\otimes_R M\to F_*M$, we can recover the corresponding Frobenius action on $M$ by first mapping $M\to F_*R\otimes_R M$ via $\eta\mapsto 1\otimes \eta$, and letting $F(\eta)$ be the element $\theta(1\otimes \eta)$ upon identifying $F_*M$ with $M$ as abelian groups. If we think of $F_*R$ as $R^{1/p}$, with $F_*M=M^{1/p}$, then the $Z$-linear identification $M^{1/p}\to M$ may be regarded as a formal $p$th power map.

If $\mathcal{F}_R(R)$ is identified with $R$ in the natural way, then the structure morphism $\mathcal{F}_R(R)\to R$ of the natural action on $R$ is the identity map $R\xrightarrow{1} R$. For an ideal $I\subseteq R$, the Frobenius homomorphism on $R/I$ can be understood as an action either over $R$ or over $R/I$. In the former case, the structure morphism $\mathcal{F}_R(R/I)=R/I^{[p]}\to R/I$ is the quotient map by $I/I^{[p]}$. In the latter case, the structure morphism $\mathcal{F}_{R/I}(R/I)=R/I\to R/I$ is the identity. To avoid ambiguity, we will refer to the former as the $R\ncfrob$ structure morphism and the latter as the $(R/I)\ncfrob$ structure morphism.

If the Frobenius homomorphism of $R$ is flat -- equivalently, if $R$ is regular \cite{Kunz1969} -- then $\mathcal{F}_R(-)$ is exact, and for any ideal $I\subseteq R$ and any $i\geq 0$, the module $\mathcal{F}_R(H^i_I(R))$ is canonically identified with $H^i_I(R)$. Under this identification, the structure morphism $\mathcal{F}_R(H^i_I(R))\to H^i_I(R)$ of the natural action is the identity map $H^i_I(R)\xrightarrow{1} H^i_I(R)$. The natural action on $H^i_I(R)$ is therefore a unit action.

Let $R$ be regular, let $\f=f_1,\ldots,f_c\in R$ be a regular sequence of codimension $c$, and let $Q_{\f}$ denote the cokernel of the Frobenius stable embedding $R/\f\hookrightarrow H^c_{\f}(R)_{\text{fed}}$ that sends $1\mapsto \lbrak 1/f\rbrak$, equipped with its induced action. There is a Frobenius stable short exact sequence
\begin{equation}\label{comp1}
  0\to R/\f\to H^c_{\f}(R)_{\text{fed}}\to Q_{\f}\to 0
\end{equation}
A \textit{Frobenius action on a complex} $A^\bullet$ is a choice of Frobenius action on each term $A^i$ such that the differentials $d^i:A^i\to A^{i+1}$ are Frobenius stable. Analogous to the situation with modules, the data of a Frobenius action on $A^\bullet$ is equivalent to specifying $R$-linear map of complexes $\Theta:\mathcal{F}_R(A^\bullet)\to A^\bullet$ -- the \textit{structure morphism} of the complex. In this section, we will describe the structure morphism of the three-term complex \eqref{comp1}.

\begin{theorem}\label{structuremorphism}
  Let $R$ be a regular ring of prime characteristic $p>0$, let $\f=f_1,\ldots,f_c\in R$ be a regular sequence, let $f=\prod_{i=1}^c f_i$, and let $A^\bullet$ denote the complex described in \eqref{comp1} corresponding to the Frobenius stable embedding $R/\f\to H^c_{\f}(R)_{\text{fed}}$. Let $\theta_{R/\f}$, $\theta_{\text{fed}}$, and $\theta_Q$ denote the $R\ncfrob$ structure morphisms of $R/\f$, $H^c_{\f}(R)_{\text{fed}}$, and $Q_{\f}$, respectively. Let $\Theta$ denote the structure morphism of $A^\bullet$. There is an exact sequence of complexes
  \[
  0\to K^\bullet \to \mathcal{F}_R(A^\bullet)\xrightarrow{\Theta} A^\bullet\to 0
  \]
  described term-by-term as follows, where $\mathcal{F}_R(H^c_{\f}(R))$ is identified with $H^c_{\f}(R)$ in the natural way:
\[
\begin{tikzcd}
0 & & & 0 & 0 & 0 &\\
A^\bullet\ar[u]&\hspace{-3.0em}:&  0\ar[r]& R/\f\ar[r]\ar[u]& H^c_{\f}(R)\ar[r]\ar[u]& Q_{\f}\ar[r]\ar[u]& 0\\
\mathcal{F}_R(A^\bullet)\ar[u,"\Theta"]&\hspace{-3.0em}:&  0\ar[r]& R/\f^{[p]}\ar[u,"\theta_{R/\f}"]\ar[r]& H^c_{\f}(R)\ar[r]\ar[u,"\theta_{\text{fed}}"]& \mathcal{F}_R(Q_{\f})\ar[r]\ar[u,"\theta_Q"]& 0\\
K^\bullet\ar[u]&\hspace{-3.0em}:& 0\ar[r] & \f/\f^{[p]}\ar[r]\ar[u] & (0:_{H^c_{\f}(R)} f^{p-1})\ar[r]\ar[u] & V_{\f} \ar[r]\ar[u] & 0\\
0\ar[u]&  & & 0\ar[u] & 0\ar[u] & 0\ar[u] &
\end{tikzcd}
\]
The module $V_{\f}:=\text{Ker}(\theta_{Q})$ can be described by the direct limit
\[
\cdots\to\frac{\f^{q+1}}{f^qR+\f^{[q+p]}}\xrightarrow{f^{qp-q}}\frac{\f^{qp+1}}{f^{qp}R+\f^{[qp+p]}}\to\cdots \to V_{\f}
\]
\end{theorem}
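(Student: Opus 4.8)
The plan is to obtain the entire diagram as the $3\times 3$ (nine‑lemma) diagram attached to the morphism of short exact sequences $\Theta\colon\mathcal{F}_R(A^\bullet)\to A^\bullet$, after identifying each of the three structure morphisms explicitly. Since $R$ is regular, the Frobenius of $R$ is flat, so $\mathcal{F}_R$ is exact and commutes with direct limits; applying it to \eqref{comp1} and using the canonical identifications $\mathcal{F}_R(R/\f)=R/\f^{[p]}$ and $\mathcal{F}_R(H^c_\f(R))=H^c_\f(R)$ produces the short exact middle row $0\to R/\f^{[p]}\to H^c_\f(R)\to\mathcal{F}_R(Q_{\f})\to 0$. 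By the description of $R\ncfrob$ structure morphisms of Frobenius quotients recalled above, $\theta_{R/\f}\colon R/\f^{[p]}\to R/\f$ is the quotient map, with kernel $\f/\f^{[p]}$. Since $F_{\text{fed}}=f^{p-1}F_{\text{nat}}$ and the natural action on $H^c_\f(R)$ has structure morphism the identity, the structure morphism of $F_{\text{fed}}$ is the composite of that of $F_{\text{nat}}$ with multiplication by $f^{p-1}$; thus $\theta_{\text{fed}}\colon H^c_\f(R)\to H^c_\f(R)$ is multiplication by $f^{p-1}$, with kernel $(0:_{H^c_\f(R)}f^{p-1})$, and it is surjective because $f^{p-1}\lbrak r/f^{\,n+p-1}\rbrak=\lbrak r/f^{\,n}\rbrak$ for all $r,n$.

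Next I would feed $\Theta$ into the snake lemma. Surjectivity of $\theta_{R/\f}$ and of $\theta_{\text{fed}}$ forces surjectivity of $\theta_Q$, so $\Theta$ is a termwise surjection of complexes and $0\to K^\bullet\to\mathcal{F}_R(A^\bullet)\xrightarrow{\Theta}A^\bullet\to 0$ is exact, where $K^\bullet=\ker\Theta$ has the three displayed terms $\f/\f^{[p]}$, $(0:_{H^c_\f(R)}f^{p-1})$, and $V_{\f}:=\ker\theta_Q$, with differentials induced from $\mathcal{F}_R(A^\bullet)$. That $K^\bullet$ is itself short exact follows from the kernel–cokernel sequence of the snake lemma — the three cokernels vanish, giving the exact $0\to\f/\f^{[p]}\to(0:_{H^c_\f(R)}f^{p-1})\to V_{\f}\to 0$ — or equivalently from the long exact cohomology sequence of the short exact sequence of complexes, since both $\mathcal{F}_R(A^\bullet)$ and $A^\bullet$ are acyclic. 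The one compatibility to check is that the leftmost vertical map $\f/\f^{[p]}\hookrightarrow(0:_{H^c_\f(R)}f^{p-1})$ is $\mathcal{F}_R$ applied to $R/\f\hookrightarrow H^c_\f(R)$ (so it carries $u+\f^{[p]}$ to $\lbrak u/f^{\,p}\rbrak$), which is immediate from naturality of $\mathcal{F}_R$.

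It remains to identify $V_{\f}=\ker\theta_Q$ with the asserted direct limit. Using the presentation $H^c_\f(R)=\varinjlim_e\big(R/\f^{[q]},\ \cdot\, f^{\,qp-q}\big)$ with $q=p^e$, in which the Fedder action is $r+\f^{[q]}\mapsto f^{p-1}r^p+\f^{[qp]}$, the image of the embedding $R/\f\hookrightarrow H^c_\f(R)$ is the sub-directed-system $(f^{\,q-1}R+\f^{[q]})/\f^{[q]}$; hence $Q_{\f}=\varinjlim_e R/(f^{\,q-1}R+\f^{[q]})$ with induced Fedder action $r\mapsto f^{p-1}r^p$. Applying $\mathcal{F}_R$ gives $\mathcal{F}_R(Q_{\f})=\varinjlim_e R/(f^{\,qp-p}R+\f^{[qp]})$, and the structure morphism $\theta_Q$ is, on the $e$-th stage, multiplication by $f^{p-1}$ into the next stage $R/(f^{\,qp-1}R+\f^{[qp]})$ of $Q_{\f}$. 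Since the transition maps of the presentation of $Q_{\f}$ are injective, the kernel of $\theta_Q$ on the $e$-th stage is $\big(f^{\,qp-1}R+\f^{[qp]}:_R f^{p-1}\big)\big/\big(f^{\,qp-p}R+\f^{[qp]}\big)$, and the colon formulas $(\f^{[b]}:f^{\,b-a})=\f^{[a]}$ evaluate the numerator to $f^{\,qp-p}R+\f^{[qp-p+1]}$. Passing to the limit over a cofinal set of $p$-powers and reindexing yields the displayed presentation of $V_{\f}$. I expect this last step — the stagewise colon computation and the attendant index bookkeeping — to be the only place requiring real care; everything else is the nine lemma together with the canonical identifications for $\mathcal{F}_R$.
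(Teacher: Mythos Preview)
Your approach is correct and essentially parallel to the paper's: both compute the structure morphism $\Theta$ and its kernel termwise via a directed-system presentation of $A^\bullet$, and your clean identification of $\theta_{\text{fed}}$ as multiplication by $f^{p-1}$ (once $\mathcal{F}_R(H^c_{\f}(R))$ is canonically identified with $H^c_{\f}(R)$) is exactly what underlies the paper's computation. The snake/nine-lemma packaging you use is equivalent to the paper's stagewise kernel calculation.

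The one place your write-up diverges is the choice of presentation for $H^c_{\f}(R)$, and this is precisely the spot you flagged as needing care. The paper presents $H^c_{\f}(R)$ as $\varinjlim_e R/\f^{[q+1]}$ (with the shift by $1$) rather than $\varinjlim_e R/\f^{[q]}$, because in the shifted system the Fedder structure morphism at level $e$ is simply the quotient $R/\f^{[q+p]}\twoheadrightarrow R/\f^{[q+1]}$, so the stagewise kernel for the $Q_{\f}$-term is immediately $(f^qR+\f^{[q+1]})/(f^qR+\f^{[q+p]})$, which is the displayed formula. Your computation in the unshifted system is correct and yields the stagewise kernel $(f^{qp-p}R+\f^{[qp-p+1]})/(f^{qp-p}R+\f^{[qp]})$, which is an equally valid presentation of the same direct limit $V_{\f}$; but the ``reindexing'' you allude to is not a simple shift of $e$, since the exponents $qp-p$ are not $p$-powers. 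To land exactly on the stated formula you should either adopt the shifted presentation $R/\f^{[q+1]}$ from the outset (as the paper does) or write down an explicit comparison map of directed systems. This is bookkeeping rather than a gap in the argument.
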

\begin{proof}
We first describe $A^\bullet$ as a direct limit of complexes $\varinjlim_e(A^\bullet_e,\psi_e)$,
\[
\begin{tikzcd}
A^\bullet&\hspace{-3.0em}:&  0\ar[r]& R/\f\ar[r]& H^c_{\f}(R)\ar[r]& Q_{\f}\ar[r]& 0\\
\vdots\ar[u]&& &\vdots\ar[u] & \vdots\ar[u] & \vdots\ar[u] &\\
A^\bullet_{e+1}\ar[u]&\hspace{-3.0em}: &  0\ar[r]& R/\f\ar[u]\ar[r,"f^{qp}"]& R/\f^{[qp+1]}\ar[r]\ar[u]& R/(f^{qp}R + \f^{[qp+1]})\ar[r]\ar[u]& 0\\
A^\bullet_e\ar[u,"\psi_e"]&\hspace{-3.0em}:&  0\ar[r]& R/\f\ar[u,"1"]\ar[r,"f^{q}"]& R/\f^{[q+1]}\ar[u,"f^{qp-q}"]\ar[r]& R/(f^{q}R + \f^{[q+1]})\ar[u,"f^{qp-q}"]\ar[r]& 0\\
\vdots\ar[u]&&&\vdots\ar[u] & \vdots\ar[u] & \vdots\ar[u] & 
%A^\bullet_1:&  0\ar[r]& R/\f\ar[r,"f^{p}"]\ar[u]& R/\f^{[p+1]}\ar[r]\ar[u]& R/(f^{p}R + \f^{[p+1]})\ar[r]\ar[u]& 0
\end{tikzcd}
\]
In the direct limit, an element $r+\f^{[q+1]}\in R/\f^{[q+1]}$ maps to the \v{C}ech cohomology class $\lbrak r/f^{q+1}\rbrak$. To describe the structure morphism $\Theta:\mathcal{F}_R(A^\bullet)\to A^\bullet$, we specify a map $\Theta_{e-1}:\mathcal{F}_R(A^\bullet_{e-1})\to A^\bullet_{e}$ for each $e$ by taking the obvious quotient maps term-by-term.
\[
\begin{tikzcd}
A^\bullet_{e}&\hspace{-3.0em}: & 0\ar[r]& R/\f\ar[r,"f^{q}"]& R/\f^{[q+1]}\ar[r]& R/(f^{q}R + \f^{[q+1]})\ar[r]& 0\\
\mathcal{F}_R(A^\bullet_{e-1})\ar[u,"\Theta_{e-1}"]&\hspace{-3.0em}: & 0\ar[r]& R/\f^{[p]}\ar[u,two heads]\ar[r,"f^{q}"]& R/\f^{[q+p]}\ar[u, two heads]\ar[r]& R/(f^{q}R + \f^{[q+p]})\ar[u, two heads]\ar[r]& 0
\end{tikzcd}
\]
It is straightforward to verify that the following diagram of complexes commutes
\[
\begin{tikzcd}
  \mathcal{F}_R(A^\bullet_e)\ar[r,"\Theta_e"] & A^\bullet_{e+1}\\
  \mathcal{F}_R(A^\bullet_{e-1})\ar[u,"\mathcal{F}_R(\psi_{e-1})"]\ar[r,"\Theta_{e-1}"] & A^\bullet_{e}\ar[u,"\psi_e"]
\end{tikzcd}
\]
so that the $(\Theta_e)_{e=1}^\infty$ induce a well-defined map on the direct limit $\mathcal{F}_R(A^\bullet)\to A^\bullet$. To describe the Frobenius action on the class $r+\f^{[q+1]}$, the inclusion $R/\f^{[q+1]}\to F_*R\otimes_R (R/\f^{[q+1]})$ sends $r+\f^{[q+1]}$ to $r^p + \f^{[qp+p]}$ once the codomain is identified with $R/{\f^{[qp+p]}}$. The quotient $R/{\f^{[qp+p]}}\twoheadrightarrow R/{\f^{[qp+1]}}$ sends that class to $r^p+\f^{[qp+1]}$. On the corresponding \v{C}ech cohomology classes in the direct limit, we have
\[
\lbrak\frac{r}{f^{q+1}}\rbrak\mapsto \lbrak\frac{r^p}{f^{qp+1}}\rbrak = \lbrak\frac{f^{p-1}r^p}{f^{qp+p}}\rbrak = F_{\text{fed}}\lbrak\frac{r}{f^{q+1}}\rbrak
\]
as desired. Concerning the kernel of the structure map, we have for each $e$ a commutative diagram
\[
\begin{tikzcd}
  0 & & & 0 & 0 & 0 &\\
A^\bullet_{e}\ar[u]&\hspace{-3.0em}: & 0\ar[r]& R/\f\ar[r,"f^{q}"]\ar[u]& R/\f^{[q+1]}\ar[r]\ar[u]& R/(f^{q}R + \f^{[q+1]})\ar[r]\ar[u]& 0\\
\mathcal{F}_R(A^\bullet_{e-1})\ar[u,"\Theta_{e-1}"]&\hspace{-3.0em}: & 0\ar[r]& R/\f^{[p]}\ar[u]\ar[r,"f^{q}"]& R/\f^{[q+p]}\ar[u]\ar[r]& R/(f^{q}R + \f^{[q+p]})\ar[u]\ar[r]& 0\\
K^\bullet_{e}\ar[u]&\hspace{-3.0em}: & 0\ar[r]& \f/\f^{[p]}\ar[u]\ar[r,"f^{q}"]& \f^{[q+1]}/\f^{[q+p]}\ar[u]\ar[r]& (f^{q}R + \f^{[q+1]})/(f^{q}R + \f^{[q+p]})\ar[u]\ar[r]& 0\\
0\ar[u] & & & 0\ar[u] & 0\ar[u] & 0\ar[u] &
\end{tikzcd}
\]
where $\mathcal{F}_R(\psi_{e-1})$ induces a map $K^\bullet_e\to K^\bullet_{e+1}$
\[
\begin{tikzcd}
K^\bullet_{e+1}&\hspace{-3.0em}: & 0\ar[r]& \f/\f^{[p]}\ar[r,"f^{qp}"]& \f^{[qp+1]}/\f^{[qp+p]}\ar[r]& \f^{[qp+1]}/(f^{qp}R + \f^{[qp+p]})\ar[r]& 0\\
K^\bullet_{e}\ar[u]&\hspace{-3.0em}: & 0\ar[r]& \f/\f^{[p]}\ar[u,"1"]\ar[r,"f^{q}"]& \f^{[q+1]}/\f^{[q+p]}\ar[u,"f^{qp-q}"]\ar[r]& \f^{[q+1]}/(f^{q}R + \f^{[q+p]})\ar[u,"f^{qp-q}"]\ar[r]& 0
\end{tikzcd}
\]
compatible with the rest of the directed system. Note that $\f^{[q+1]}/\f^{[q+p]} = (\f^{[q+p]}:f^{p-1})/\f^{[q+p]}$ for each $q=p^e$, so that in the direct limit, $\text{Ker}(\theta_{\text{fed}}) = (0:_{H^c_{\f}(R)} f^{p-1})$.
\end{proof}

Note that the kernel $(0:_{H^c_{\f}(R)} f^{p-1})$ of $\theta_{\text{fed}}$ can be explicitly described as a local cohomology module, namely, it is isomorphic to $H^{c-1}_{\f}(R/f^{p-1})$.
\begin{proposition}
  Let $R$ be a Noetherian ring, let $\f=f_1,\ldots,f_c\in R$ be a regular sequence of codimension $c$, and let $h\in R$ be a nonzerodivisor. Then
  \[
  (0:_{H^c_{\f}(R)} h)\cong H^{c-1}_{\f}(R/h).
  \]
\end{proposition}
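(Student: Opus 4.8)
The plan is to read off the isomorphism from the long exact sequence in local cohomology (with support in the ideal generated by $\f$) attached to the short exact sequence of $R$-modules
\[
0 \to R \xrightarrow{h} R \to R/h \to 0,
\]
which is exact since $h$ is a nonzerodivisor. Tensoring this sequence with the \v{C}ech complex $\check{C}^\bullet(\f;R)$ --- a bounded complex of flat $R$-modules, each term being a localization of $R$ --- preserves exactness, so it produces a short exact sequence of complexes whose $i$-th cohomologies are $H^i_\f(R)$, $H^i_\f(R)$, and $H^i_\f(R/h)$. The associated long exact sequence reads
\[
\cdots \to H^{c-1}_\f(R) \xrightarrow{h} H^{c-1}_\f(R) \to H^{c-1}_\f(R/h) \xrightarrow{\partial} H^c_\f(R) \xrightarrow{h} H^c_\f(R) \to \cdots,
\]
where the maps labelled $h$ are multiplication by $h$, inherited from the corresponding map of complexes.

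The one input beyond formal manipulation is the vanishing $H^{c-1}_\f(R) = 0$, which uses that $\f$ is a regular sequence: since $(\f)$ is generated by the $c$ elements $f_1,\dots,f_c$ we get $H^i_\f(R) = 0$ for $i > c$, and since $H^i_\f(R) \cong \varinjlim_t H^i(K^\bullet(\f^{[t]};R))$ with each $\f^{[t]} = f_1^t,\dots,f_c^t$ again a regular sequence, the Koszul cohomology --- hence its direct limit --- vanishes for $i < c$. Substituting $H^{c-1}_\f(R) = 0$ into the long exact sequence leaves
\[
0 \to H^{c-1}_\f(R/h) \xrightarrow{\partial} H^c_\f(R) \xrightarrow{h} H^c_\f(R),
\]
so the connecting homomorphism $\partial$ is an isomorphism of $H^{c-1}_\f(R/h)$ onto $\ker\big(h \colon H^c_\f(R) \to H^c_\f(R)\big) = (0 :_{H^c_\f(R)} h)$, which is the assertion.

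I do not anticipate a genuine obstacle: the argument is bookkeeping with a standard long exact sequence. The two points that deserve a sentence of care are the flatness of the \v{C}ech complex --- needed so that the sequence of complexes is exact, hence so that the long exact sequence exists --- and the vanishing $H^{c-1}_\f(R) = 0$, which really does use that $\f$ is a regular sequence, not merely a system of $c$ elements generating an ideal of height $c$. One could instead run the same computation through the direct-limit presentation $H^c_\f(R) = \varinjlim_q R/\f^{[q]}$ used in Section 2, but the \v{C}ech formulation is cleaner.
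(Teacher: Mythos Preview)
Your proof is correct and is essentially the same as the paper's: the paper computes the cohomology of the totalization of the double complex $(0\to R\xrightarrow{h}R\to 0)\otimes_R\check{C}^\bullet(\f;R)$ via the two spectral sequences, which is exactly the long exact sequence you write down, with the same two inputs ($h$ a nonzerodivisor and $H^{c-1}_{\f}(R)=0$) playing the same roles. Your phrasing via the long exact sequence is marginally more elementary in that it avoids the spectral-sequence language, but the content is identical.
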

\begin{proof}
Consider the double complex $(0\to R\xrightarrow{h} R\to 0)\otimes_R \check{C}^\bullet(\f;R)$.
  \[
\begin{tikzcd}
    & 0\ar[d] &  & 0\ar[d] & 0\ar[d] &\\
  0\ar[r] &  \check{C}^1(\f;R)\ar[d,"h"]\ar[r] & \cdots\ar[r] & \check{C}^{c-1}(\f;R)\ar[d,"h"]\ar[r] & \check{C}^c(\f;R)\ar[d,"h"]\ar[r] & 0\\
  0\ar[r] &  \check{C}^1(\f;R)\ar[r]\ar[d] & \cdots\ar[r] & \check{C}^{c-1}(\f;R)\ar[r]\ar[d] & \check{C}^c(\f;R)\ar[r]\ar[d] & 0\\
  & 0 &  & 0 & 0 &
\end{tikzcd}
\]
If we compute cohomology first horizontally and then vertically, we see that the cohomology of the totalization in degree $c$ is $\text{Hom}_R(R/h,H^c_{\f}(R))$, since $E_2=E_\infty$ and the first $c-1$ columns vanish. On the other hand, if we first take cohomology vertically and then horizontally, then $E_2=E_\infty$ and the first row vanishes, so the cohomology of the totalization in degree $c$ is also isomorphic to $H^{c-1}(R/h\otimes_R \check{C}^\bullet(\f;R))=H^{c-1}_{\f}(R/h)$, as desired.
\end{proof}

The kernel of $\theta_Q$ has a particularly nice description in codimension $2$, where it decomposes as a direct sum of two local cohomology modules, $V_{f,g}\cong H^1_f(R/g^{p-1})\oplus H^1_g(R/f^{p-1})$.

\begin{corollary}
  Let $R$ be a Noetherian ring of prime characteristic $p>0$, let $f,g\in R$ be a regular sequence, and let $V_{f,g}$ denote the following direct limit over all $q=p^e$
\[
\cdots\to\frac{(f^{q+1},g^{q+1})}{((fg)^q,f^{q+p},g^{q+p})}\xrightarrow{f^{qp-q}}\frac{(f^{qp+1},g^{qp+1})}{((fg)^{qp},f^{qp+p},g^{qp+p})}\to\cdots \to V_{f,g}
\]
Then
    \[
V_{f,g}\cong H^1_g(R/f^{p-1})\oplus H^1_f(R/g^{p-1})
  \]
\end{corollary}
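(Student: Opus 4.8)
The plan is to compute the directed system defining $V_{f,g}$ one term at a time, show that each term splits canonically as a direct sum of two cyclic $R$-modules, check that the transition maps respect this splitting and act on the two summands as multiplication by suitable powers of $g$ and of $f$, and finally recognize the two resulting colimits as the \v{C}ech computations of $H^1_g(R/f^{p-1})$ and $H^1_f(R/g^{p-1})$.

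Fix $q=p^e$ and let $V_e$ denote the $e$-th term of the system, namely the image of the ideal $(f^{q+1},g^{q+1})$ in the quotient ring $R/((fg)^q,f^{q+p},g^{q+p})$. Because $(fg)^q$, $f^{q+p}=f^{p-1}f^{q+1}$, and $g^{q+p}=g^{p-1}g^{q+1}$ all lie in the denominator ideal, $V_e$ is generated over $R$ by the classes of $f^{q+1}$ and $g^{q+1}$, so I would work with the surjection $\mu_e\colon R\oplus R\twoheadrightarrow V_e$ given by $(r,s)\mapsto\overline{f^{q+1}r+g^{q+1}s}$. The heart of the argument is the identification $\ker\mu_e=(f^{p-1},g^q)\oplus(g^{p-1},f^q)$. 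The inclusion ``$\supseteq$'' is immediate from $f^{q+1}\cdot f^{p-1}=f^{q+p}$ and $f^{q+1}\cdot g^q=f\cdot(fg)^q$, together with the symmetric identities. For ``$\subseteq$'': from a relation $f^{q+1}r+g^{q+1}s\in((fg)^q,f^{q+p},g^{q+p})$ one rearranges to an equality $f^q\cdot X=g^q\cdot Y$, uses that $g^q,f^q$ is a regular sequence to deduce $X\in(g^q)$ (whence $fr\in(f^p,g^q)$), and then uses that $g^q,f$ is a regular sequence to conclude $r\in(f^{p-1},g^q)$; the statement for $s$ follows symmetrically. This yields $V_e\cong R/(f^{p-1},g^q)\oplus R/(g^{p-1},f^q)$, with the first summand carrying the ``$f^{q+1}$-part'' and the second the ``$g^{q+1}$-part''.

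Next I would pin down the transition maps. The map $V_e\to V_{e+1}$ is multiplication by $(fg)^{qp-q}$, which is the $c=2$ instance of the maps $K^\bullet_e\to K^\bullet_{e+1}$ appearing in the proof of \cref{structuremorphism}, with the product of the two elements of the sequence playing the role of ``$f$'' there. Pushing $\mu_e(r,s)$ through this map gives $\overline{f^{qp+1}(g^{qp-q}r)+g^{qp+1}(f^{qp-q}s)}=\mu_{e+1}(g^{qp-q}r,\,f^{qp-q}s)$, so under the above identifications the transition map is $(\bar r,\bar s)\mapsto(\overline{g^{qp-q}r},\,\overline{f^{qp-q}s})$: multiplication by $g^{qp-q}$ on the first summand and by $f^{qp-q}$ on the second. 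Passing to colimits, $V_{f,g}$ becomes the direct sum of $\varinjlim_e\big(R/(f^{p-1},g^{p^e})\xrightarrow{g^{p^{e+1}-p^e}}R/(f^{p-1},g^{p^{e+1}})\big)$ and the analogous system with $f$ and $g$ interchanged. Since $\{p^e\}_e$ is cofinal in $\N$, the first colimit is $H^1_g(R/f^{p-1})$ --- its standard \v{C}ech computation being $\varinjlim_n (R/f^{p-1})/g^n(R/f^{p-1})$ with transition maps multiplication by $g$ --- and the second is $H^1_f(R/g^{p-1})$, which is the assertion.

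I expect the only real obstacle to be the kernel computation $\ker\mu_e=(f^{p-1},g^q)\oplus(g^{p-1},f^q)$, and within it the inclusion ``$\subseteq$'': one must split an arbitrary relation among $f^{q+1},g^{q+1},(fg)^q,f^{q+p},g^{q+p}$ cleanly into an $f$-part and a $g$-part. This rests on the fact that suitable powers of the regular sequence $f,g$ --- and of its reversal $g,f$ --- are again regular sequences, so that the relevant colon ideals collapse as expected; I would be careful to use precisely the colon identities for regular sequences recorded earlier in Section 2. The remaining ingredients --- surjectivity of $\mu_e$, compatibility of the splittings with the transition maps, cofinality of $\{p^e\}$, and the \v{C}ech identification of $H^1$ --- are routine.
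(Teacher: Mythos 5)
Your proposal is correct and takes essentially the same route as the paper: decompose each term $V_e$ as $R/(f^{p-1},g^q)\oplus R/(g^{p-1},f^q)$ via a colon-ideal computation that uses powers of the regular sequence and of its reversal, observe that the transition map $(fg)^{qp-q}$ restricts to $g^{qp-q}$ and $f^{qp-q}$ on the two summands, and identify the resulting colimits as $H^1_g(R/f^{p-1})$ and $H^1_f(R/g^{p-1})$. The only cosmetic difference is that you compute the full kernel of the presentation $R\oplus R\twoheadrightarrow V_e$, whereas the paper shows directly that the $R$-spans of $\overline{f^{q+1}}$ and $\overline{g^{q+1}}$ intersect trivially and then reads off the annihilators; both hinge on the same colon identity and the same (implicit) permutability of $f,g$.
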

\begin{proof}
Note that if $af^{q+1}=bg^{q+1}$ mod $((fg)^q,f^{q+p},g^{q+p})$ for some $a,b\in R$, then
\begin{align*}
  a\in((fg)^q,f^{q+p},g^{q+1}):f^{q+1} &= \left((f^{q+p},g^{q+1}):(f^p,g)\right) : f^{q+1}\\
  &=\left((f^{q+p},g^{q+1}): f^{q+1}\right) :(f^p,g)\\
  &=\left(f^{p-1},g^{q+1}\right) :(f^p,g)\\
  &=\left(f^{p-1},g^q\right)
\end{align*}
so that $af^{q+1} \in (f^{q+p},f^{q+1}g^q)$, which is zero mod $((fg)^q,f^{q+p},g^{q+p})$. Thus, the generators $u_{1,e}:=f^{q+1}$ and $u_{2,e}:=g^{q+1}$ of $(f^{q+1},g^{q+1})/((fg)^q,f^{q+p},g^{q+p})$ have $R$-spans with an intersection of $0$, yielding a direct sum
  \[
  \frac{(f^{q+1},g^{q+1})}{((fg)^q,f^{q+p},g^{q+p})}\cong \frac{R u_{1,e}}{g^q u_{1,e}R + f^{p-1} u_{1,e} R}\oplus \frac{R u_{2,e}}{f^q u_{2,e}R + g^{p-1} u_{2,e} R}
  \]
  The transition map $(fg)^{qp-q}$ sends $f^{q+1}=u_{1,e}$ to $g^{qp-q}f^{qp+1}=g^{qp-q}u_{1,e+1}$, and likewise, $u_{2,e}\mapsto f^{qp-q} u_{2,e+1}$, breaking into the direct sum of transition maps on the $u_1$ and $u_2$ components,
  \[
  \cdots\to\frac{R u_{1,e}}{g^q u_{1,e}R + f^{p-1} u_{1,e} R}\xrightarrow{g^{qp-q}}
  \frac{R u_{1,e+1}}{g^{qp} u_{1,e+1}R + f^{p-1} u_{1,e+1} R}\to\cdots\to H^1_g(R/f^{p-1})
  \]
  and
  \[
  \cdots\to\frac{R u_{2,e}}{f^q u_{2,e}R + g^{p-1} u_{2,e} R}\xrightarrow{f^{qp-q}}
  \frac{R u_{2,e+1}}{f^{qp} u_{2,e+1}R + g^{p-1} u_{2,e+1} R}\to\cdots\to H^1_f(R/g^{p-1})
  \]
as desired.
\end{proof}

%%%%%%%%%%%%%%%%%%%%%%%%%%%%%%%%%%%%%%%%%%%%%%%%%%%%%%%%%%%%%%%%%%%%%%%%%%%%%%%%%%%%%%%%%%%%%%%%%%%
%%%%%%%%%%%%%%%%%%%%%%%%%%%%%%%%%%%%%%%%%%%%%%%%%%%%%%%%%%%%%%%%%%%%%%%%%%%%%%%%%%%%%%%%%%%%%%%%%%%
% SIMPLICIAL COMPLEX
%%%%%%%%%%%%%%%%%%%%%%%%%%%%%%%%%%%%%%%%%%%%%%%%%%%%%%%%%%%%%%%%%%%%%%%%%%%%%%%%%%%%%%%%%%%%%%%%%%%
%%%%%%%%%%%%%%%%%%%%%%%%%%%%%%%%%%%%%%%%%%%%%%%%%%%%%%%%%%%%%%%%%%%%%%%%%%%%%%%%%%%%%%%%%%%%%%%%%%%
\section{A Simplicial Complex of Local Cohomology Modules}\label{simplicial section}

Assume from this point onward that all regular sequences $\f$ are permutable, by which we mean that $f_{\sigma(1)},\ldots,f_{\sigma(c)}$ is a regular sequence for all $\sigma\in S_c$. This is automatic in the local case, or in the standard graded case assuming that $\f$ is homogeneous. Let $[c]:=\{1,\ldots,c\}$, and for a subset $T\subseteq [c]$, let $\f_T$ denote the subsequence of $\f$ indexed by the elements of $T$. The permutability of $\f$ is equivalent to assuming that $\f_T$ is a regular sequence for all subsets $T\subseteq [c]$. Let $\tT=[c]\setminus T$, with $\f_{\tT}$ the complementary subsequence to $\f_T$ in $\f$. 

%%%%%%%%%%%%%%%%%%%%%%%%%%%%%%%%%%%%%%%%%%%%%%%%%%%%%%%%%%%%%%%%%%%%%%%%%%%%%%%%%%%%%%%%%%%%%%%%%%%
\subsection{The Chain Complex and Filtration.}

Let $R$ be a Noetherian ring, and fix a permutable regular sequence $\f=f_1,\ldots,f_c\in R$. For $T\subseteq [c]$, let $f_T=\prod_{i\in T} f_i$, and write $f=f_{[c]}$ for convenience. For $a\geq 1$, recall that we denote $\f^{[a]}:=f_1^a,\ldots,f_c^a$ regardless of whether $a$ is a power of the characteristic, and the notation $\f^{[a]}_T$ denotes the subsequence of $\f^{[a]}$ indexed by $T\subseteq [c]$.

Since $\f$ is permutable, we can use Proposition \ref{fedemb} to obtain identifications
\begin{equation}\label{anncoh}
  H^{c-i}_{\f_T}(R/\f_{\tT}) = (0:_{H^{c}_{\f}(R)}\f_{\tT})
  \end{equation}
for each subset $T\subseteq [c]$. Let $M=H^c_{\f}(R)$, and observe that there are inclusions $\iota_{T,S}:(0:_M \f_{\tS}) \hookrightarrow (0:_M \f_{\tT})$ whenever $S \subseteq T$.
\begin{definition}
  Let $R$ be a Noetherian ring and let $\f$ be a permutable regular sequence of codimension $c\geq 1$. Let $M=H^c_{\f}(R)$. The \textit{$\DDelta$ complex of $\f$}, denoted $\DDelta_{\fR}^\bullet(R)$, is the chain complex $(M^\bullet, \partial^\bullet)$ defined as follows.
  \begin{itemize}
  \item $M^i = \bigoplus_{|S| = i} (0:_M \f_{\tS})$ for $0\leq i\leq c$
  \item $\partial^i = \sum_{j=1}^c (-1)^j d^i_j$ where $d^i_j|_{(0:_M\f_{\tS})}$ is the direct sum of the inclusion maps $\iota_{S,T}:(0:_M\f_{\tS})\hookrightarrow (0:_M\f_{\tT})$ ranging over the sets $T\supseteq S$ of size $|T|=i+1$ such that $T\setminus S$ is the $j$th element of $T$, enumerated so that $t_a < t_b$ (as elements of $[c]$) when $a < b$.
  \end{itemize}
\end{definition}
The choice of differentials gives $\{M^i\}_{i = 0}^c$ the structure of a {\em semi-cosimplicial} $R$-module \cite[Def. 8.1.9, Ex. 8.1.6]{WeibelHomological}, which is to say, $d^{i+1}_k d^i_j = d^{i+1}_jd^i_{k-1}$ for $j < k$. Checking that $\partial^{i+1}\partial^i = 0$ is similar to checking that the chain maps in a \v{C}ech complex square to zero, and depends only on the semi-cosimplicial structure \cite[Def. 8.2.1]{WeibelHomological}. 

The complex $\DDelta_{\fR}^\bullet(R)$ comes equipped with quotient complexes induced by our enumeration $f_1, \dots, f_c$ of this regular sequence, meaning that if $\sigma: [c] \to [c]$ is a non-trivial bijection, then setting $g_i = f_{\sigma(i)}$, $1 \le i \le c$, the regular sequence $g_1, \dots, g_c$ induces a {\em distinct} collection of quotients.

\begin{definition}\label{filtration}
  Let $R$ be a Noetherian ring and let $\f$ be a permutable regular sequence of codimension $c\geq 1$. For fixed $n$ such that $1\leq n\leq c$, define the complex $\DDelta^\bullet_{\f}(R)_n$ by
  \[    
  \DDelta_{\fR}^i(R)_n := \oplus_{S \subset [n],\hspace{0.3em} |S|=i\,} (0 :_M \f_{\tS})\subseteq \DDelta_{\fR}^i(R)
  \]
with differentials $\partial^i_0 = \sum_{j=1}^n (-1)^j d^i_{j,n}$ defined so that the map $d^i_{j,n}|_{(0:_M\f_{\tS})}$ for $S\subseteq [n]$ is the direct sum of inclusions $\iota_{S,T}:(0:_M\f_{\tS})\hookrightarrow (0:_M\f_{\tT})$ ranging only over the sets $T$ of size $|T|=i+1$ such that $S\subseteq T\subseteq [n]$ and such that $T\setminus S$ is the $j$th element of $T$, enumerated so that $t_i < t_j$ (as elements of $[c]$) when $i < j$.
\end{definition}

For example, $\DDelta^\bullet_{\fR}(R)_n = (0 \to R/{\fR} \to 0)$ and $\DDelta^\bullet_{\fR}(A)_c = \DDelta_{\fR}(R)$. As a warning, $\DDelta_{\fR}^i(A)_n$ are \textit{not} subcomplexes of $\DDelta_{\fR}^\bullet(R)$. They are, however, quotient complexes. Let $K^\bullet_n$ denote the kernel of the surjection $\DDelta^\bullet_{\f}(R)_n \to \DDelta^\bullet_{\f}(R)_{n-1}$ defined term-by-term in the obvious way. 
  \[
  \begin{tikzcd}
    0 \ar[r]& K^\bullet_n \ar[r]& \DDelta^\bullet_{\f}(R)_n \ar[r]& \DDelta^\bullet_{\f}(R)_{n-1} \ar[r] & 0
  \end{tikzcd}
  \]
  For our calculation of the cohomology of $\DDelta^\bullet_{\fR}(R)$, the key observations are as follows.
  \begin{proposition}\label{quotcomp}
    Let $R$ be a Noetherian ring and let $\f$ be a permutable regular sequence of codimension $c\geq 1$. For fixed $n$ such that $1\leq n\leq c$, where all set complements (e.g. $\widetilde{[n]}$) are taken within $[c]$, we have the following.
    \begin{enumerate}
    \item $\DDelta^\bullet_{\f}(R)_n = \DDelta^\bullet_{\f_{[n]}}(R/\f_{\widetilde{[n]}})$.
    \item $K_n^\bullet = H^1_{f_n}(\DDelta^\bullet_{\f_{[n-1]}}(R/\f_{\widetilde{[n]}}))[-1]$.
    \end{enumerate}
    where $[-1]$ denotes the right-shift operator on cohomologically indexed complexes.
  \end{proposition}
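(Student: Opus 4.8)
The plan is to verify both statements by directly matching up the terms and differentials of the complexes in question, using the identification \eqref{anncoh} that comes from iterating Proposition \ref{fedemb}. For part (1), I would observe that for $S \subseteq [n]$, the complement of $S$ taken within $[n]$ is $[n] \setminus S$, while the complement taken within $[c]$ is $\widetilde{S} = ([n]\setminus S) \sqcup \widetilde{[n]}$. The degree-$i$ term of $\DDelta^\bullet_{\f_{[n]}}(R/\f_{\widetilde{[n]}})$ is $\bigoplus_{S\subseteq[n],|S|=i}(0 :_{M'} (\f_{[n]})_{[n]\setminus S})$ where $M' = H^n_{\f_{[n]}}(R/\f_{\widetilde{[n]}})$; applying Proposition \ref{fedemb} with $\g = \f_{\widetilde{[n]}}$ identifies $M'$ with $(0 :_M \f_{\widetilde{[n]}}) \subseteq M = H^c_{\f}(R)$, and under this identification the annihilator of $(\f_{[n]})_{[n]\setminus S}$ inside $M'$ becomes $(0 :_M \f_{\widetilde{[n]}} \cup \f_{[n]\setminus S}) = (0 :_M \f_{\widetilde S})$. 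So the terms agree on the nose with $\DDelta^i_{\f}(R)_n$. The differentials on both sides are signed sums of the inclusion maps $\iota_{S,T}$ for $T \subseteq [n]$ of size $i+1$, indexed by which position of $T$ is being added, with identical sign conventions; hence the differentials agree as well. This gives (1) as an equality of complexes (equivalently, of $R\ncfrob$-modules, since the Fedder actions are compatible with all these embeddings).

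For part (2), I would first apply (1) with $n$ and with $n-1$ in place of $n$: the surjection $\DDelta^\bullet_{\f}(R)_n \to \DDelta^\bullet_{\f}(R)_{n-1}$ is the quotient that, in degree $i$, kills exactly the summands indexed by subsets $S \subseteq [n]$ with $n \in S$ (note that $\DDelta^\bullet_{\f}(R)_{n-1} = \DDelta^\bullet_{\f_{[n-1]}}(R/\f_{\widetilde{[n-1]}})$, and since $\f_{\widetilde{[n-1]}} = \f_n, \f_{\widetilde{[n]}}$ as a sequence, one has $R/\f_{\widetilde{[n-1]}} = (R/\f_{\widetilde{[n]}})/f_n$, so both complexes are built over the same base ring $R/\f_{\widetilde{[n]}}$). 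Thus $K^\bullet_n$ has degree-$i$ term $\bigoplus_{S\subseteq[n],\,n\in S,\,|S|=i}(0:_M \f_{\widetilde S})$. Writing $S = S' \cup \{n\}$ with $S' \subseteq [n-1]$ and $|S'| = i-1$, and using $\widetilde S = \widetilde{S'}\setminus\{n\}$ (complement of $S'$ within $[c]$, minus $n$), the summand is $(0 :_M \f_{\widetilde{S'}\setminus\{n\}})$. Under the identification $(0 :_M \f_{\widetilde{[n]}}) = H^n_{\f_{[n]}}(R/\f_{\widetilde{[n]}})$ from Proposition \ref{fedemb}, this is $(0 :_{H^n_{\f_{[n]}}(R/\f_{\widetilde{[n]}})}\, (\f_{[n]})_{[n-1]\setminus S'})$ — the annihilator of everything except the $f_n$-slot and the $S'$-slots — which by the colon description of $H^1_{f_n}(-)$ of an annihilator module is exactly the degree-$(i-1)$ term of $H^1_{f_n}(\DDelta^\bullet_{\f_{[n-1]}}(R/\f_{\widetilde{[n]}}))$. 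The degree shift $[-1]$ accounts for the fact that $S'$ has size $i-1$ while its summand sits in degree $i$ of $K^\bullet_n$.

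The main obstacle will be bookkeeping with the signs and the index conventions: I need to check that the differential induced on $K^\bullet_n$ from that of $\DDelta^\bullet_{\f}(R)_n$, after stripping off the forced element $n$ from every index set, really is the differential of $\DDelta^\bullet_{\f_{[n-1]}}(R/\f_{\widetilde{[n]}})$ up to a global sign, and that the $f_n$-torsion/colon structure is respected — i.e., that passing to $H^1_{f_n}$ of the smaller $\DDelta$ complex commutes with applying the differential $\partial^i$. Concretely, on the summand indexed by $S = S'\cup\{n\}$, the component maps $d^i_j$ of $\partial^i$ that stay inside $K^\bullet_n$ are precisely those that add an element of $[n-1]$ (not $n$, which is already present), so after deleting $n$ they become the component maps of the $(n-1)$-complex's differential; the one sign subtlety is that inserting a new element $t < n$ into $S'\cup\{n\}$ occupies position $j$ among the elements $< n$, which is the same position it would occupy in $S'$ alone, so no sign discrepancy arises beyond a possible overall sign. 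The Fedder-action compatibility is inherited from Proposition \ref{fedemb} throughout, so no extra work is needed there. I would present the argument as: (i) the term-by-term identification via iterated Proposition \ref{fedemb}; (ii) the observation that deleting the forced index $n$ is a degree-shift and an isomorphism of graded modules onto $H^1_{f_n}(\DDelta^\bullet_{\f_{[n-1]}}(R/\f_{\widetilde{[n]}}))[-1]$; (iii) a short sign check that this is compatible with differentials.
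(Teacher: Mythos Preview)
Your proposal is correct and follows essentially the same approach as the paper: both arguments establish the term-by-term identification via the annihilator description $(0:_M \f_{[c]-S}) = (0:_{M_n} \f_{[n]-S})$ (where $M_n = H^n_{\f_{[n]}}(R/\f_{\widetilde{[n]}})$) coming from Proposition~\ref{fedemb}, and for part (2) both write $S = S' \cup \{n\}$ and use $H^1_{f_n}(H^{i-1}_{\f_{S'}}(R/\f_{[c]-S})) = H^i_{\f_S}(R/\f_{[c]-S})$ to get the shift. Your discussion of the sign check for the differentials is more explicit than the paper's, which simply declares the agreement ``straightforward.''
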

  \begin{proof}
    Let $M=H^c_{\f}(R)$. The module $\DDelta^i_{\f}(R)_n$ is the direct sum of annihilators $(0:_M \f_{\tS})$ ranging over all subsets $S\subseteq [n]$ of size $|S|=i$. In particular, we have $[c]-[n]\subseteq [c]-S$ for all such $S$, so that
    \[
    (0:_M \f_{[c]-S}) = (0:_{(0:_M \f_{[c]-[n]})} \f_{[n]-S}) = (0:_{M_n} \f_{[n]-S})
    \]
    where $M_n=H^n_{\f_{[n]}}(R/{\f_{[c]-[n]}})$, and thus, $\DDelta^i_{\f}(R)_n=\DDelta^i_{\f_{[n]}}(R/\f_{[c]-[n]})$. The agreement of the differentials in the complexes $\DDelta^\bullet_{\f}(R)_n$ and $\DDelta^\bullet_{\f_{[n]}}(R/\f_{[c]-[n]})$ is a straightforward consequence of their definitions.

    Concerning $K^i_n$, the kernel of $\DDelta^\bullet_{\f}(R)_n\twoheadrightarrow\DDelta^\bullet_{\f}(R)_{n-1}$ is the direct sum of the annihilators $(0:_M \f_{\tS})$ ranging over subsets $S\subseteq [n]$ of size $|S|=i$ such that $n\in S$. We therefore have an isomorphism
    \[
    (0:_M \f_{[c]-S})= H^i_{\f_S}(R/\f_{[c]-S})=H^1_{f_n}\left(H^{i-1}_{\f_{S-\{n\}}}(R/\f_{[c]-S})\right)=H^1_{f_n}\left(\left(0:_{M_n}\f_{[n-1]-(S-\{n\})}\right)\right)
    \]
    where, once again, $M_n=H^n_{\f_{[n]}}(R/\f_{[c]-[n]})$. The sets $S-\{n\}$ for $S\subseteq [n]$ of size $|S|=i$ such that $n\in S$ correspond precisely to the subsets $S^\prime\subseteq [n-1]$ of size $|S^\prime|=i-1$. Thus, $K^i_n=\DDelta^{i-1}_{\f_{[n-1]}}(R/\f_{[c]-[n]})$. Confirming agreement of the corresponding differentials is straightforward.
  \end{proof}

\begin{example}\label{filtration and quotients}
  Suppose $c = 4$. When $n = 0$, $\DDelta_{\fR}^i(R)_n = (0:_M {\fR}) \cong R/{\fR}$ for $i = 0$ and $\DDelta_{\fR}^i(R)_n = 0$ for $i > 0$. We show $\DDelta_n$ for $1\leq n \leq 4$, identifying $(0:_M \f_{\tS})$ with $H^{i}_{\f_S}(R/\f_{\tS})$. Components of $\partial^i$ corresponding to $d^i_1$, $d^i_2$, $d^i_3$, and $d^i_4$ are indicated in red, blue, dashed red, and dashed blue, with (dashed or solid) red indicating a sign change.
  
  \vskip2ex
  
  \begin{tabular}{l | l}
    $n$ & $\DDelta_{\fR}^\bullet(R)_n$ \\
    \hline
    1 & 
    \begin{tikzcd}
      & H^1_{f_1}(R/\f_{\{2,3,4\}})\\
      R/{\fR} \ar[ru, red] &  & 
    \end{tikzcd}
    \\
    \hline
    \\
    2 & 
    \begin{tikzcd}
                                     & H^1_{f_1}(R/\f_{\{2,3,4\}}) \ar[r, red]\ar[d,phantom,"\oplus"] & H^2_{f_1,f_2}(R/\f_{\{3,4\}}) &\\
       R/{\fR} \ar[r, blue]\ar[ru, red] & H^1_{f_2}(R/\f_{\{1,3,4\}}) \ar[ru, red]                  &                               & 
    \end{tikzcd}
    \\
    \hline
    \\
    3 & 
    \begin{tikzcd}
      %% First line
      & H^1_{f_1}(R/\f_{\{2,3,4\}}) \ar[r, red]\ar[rd, blue]\ar[d,phantom,"\oplus"] & H^2_{f_1, f_2}(R/\f_{\{3,4\}}) \ar[rd, red] \ar[d, phantom, "\oplus"]
      &   & 
      \\
      %% Second line
       R/{\fR} \ar[rd, dashed, red, bend right = 20]\ar[ru, red]\ar[r, blue]& H^1_{f_2}(R/\f_{\{1,3,4\}}) \ar[rd, blue]\ar[ru, red]\ar[d,phantom,"\oplus"] & H^2_{f_1,f_3}(R/\f_{\{2,4\}}) \ar[r, red] \ar[d,phantom,"\oplus"]& H^3_{f_1,f_2,f_3}(R/f_4)&
      \\
      %% Third line
      & H^1_{f_3}(R/\f_{\{1,2,4\}}) \ar[ru, red]\ar[r, blue] & H^2_{f_2, f_3}(R/\f_{\{1,4\}}) \ar[ru, red] & & 
    \end{tikzcd}
    \\
    \hline
    \\
    4 &
    \begin{tikzcd}
      %% First line
      & H^1_{f_1}(R/\f_{\{2,3,4\}})\ar[rddd,red,dashed] \ar[r, red]\ar[rd, blue]\ar[d,phantom,"\oplus"] & H^2_{f_1, f_2}(R/\f_{\{3,4\}})\ar[rddd,blue] \ar[rd, blue] \ar[d, phantom, "\oplus"]
      &  & 
      \\
      %% Second line
       R/{\fR} \ar[rd, dashed, red, bend right = 20]\ar[ru, red]\ar[r, blue]\ar[rddd,blue,dashed, bend right = 20]& H^1_{f_2}(R/\f_{\{1,3,4\}})\ar[rddd,red,dashed] \ar[rd, blue]\ar[ru, red]\ar[d,phantom,"\oplus"] & H^2_{f_1,f3}(R/\f_{\{2,4\}})\ar[rddd,blue] \ar[r, red] \ar[d,phantom,"\oplus"]& H^3_{f_1,f_2,f_3}(R/f_4)\ar[dd,phantom,"\oplus"]\ar[rddd,red]&
      \\
      %% Third line
      & H^1_{f_3}(R/\f_{\{1,2,4\}})\ar[dd,phantom,"\oplus"]\ar[rddd,red,dashed] \ar[ru, red]\ar[r, blue] & H^2_{f_2, f_3}(R/\f_{\{1,4\}})\ar[rddd,blue]\ar[d,phantom,"\oplus"] \ar[ru, red] & &
      \\
      %% Fourth line
      & & H^2_{f_1, f_4}(R/\f_{\{2,3\}})\ar[d,phantom,"\oplus"]\ar[r,red]\ar[dr,blue]  & H^3_{f_1,f_2,f_4}(R/f_3)\ar[dr,red]& 
      \\
      %% Fifth line
      & H^1_{f_4}(R/\f_{\{1,2,3\}}) \ar[ru, red]\ar[r, blue]\ar[rd,red,dashed,bend right=20] & H^2_{f_2, f_4}(R/\f_{\{1,3\}})\ar[d,phantom,"\oplus"]\ar[ur,red]\ar[dr,blue]  & H^3_{f_1,f_3,f_4}(R/f_2)\ar[r,red]& H^4_{\f}(R)
      \\
      %% Sixth line
      & & H^2_{f_3, f_4}(R/\f_{\{1,2\}})\ar[ur,red]\ar[r,blue]  & H^3_{f_2,f_3,f_4}(R/f_1)\ar[ru,red]& 
    \end{tikzcd}    
  \end{tabular}
  
  \vskip5ex
  
  The subcomplexes $K^\bullet_2$, $K^\bullet_3$, and $K^\bullet_4$ are displayed with terms generally to the lower right. For example, $K^\bullet_4$ consists of the terms in $\DDelta^\bullet_{\f}(R)_4$ that involve the local cohomology of quotients $R/\f_S$ for subsets $S\subseteq \{1,2,3\}$.
\end{example}

%%%%%%%%%%%%%%%%%%%%%%%%%%%%%%%%%%%%%%%%%%%%%%%%%%%%%%%%%%%%%%%%%%%%%%%%%%%%%%%%%%%%%%%%%%%%%%%%%%%
%%%%%%%%%%%%%%%%%%%%%%%%%%%%%%%%%%%%%%%%%%%%%%%%%%%%%%%%%%%%%%%%%%%%%%%%%%%%%%%%%%%%%%%%%%%%%%%%%%%
% COHOMOLOGY OF THE COMPLEX
%%%%%%%%%%%%%%%%%%%%%%%%%%%%%%%%%%%%%%%%%%%%%%%%%%%%%%%%%%%%%%%%%%%%%%%%%%%%%%%%%%%%%%%%%%%%%%%%%%%
%%%%%%%%%%%%%%%%%%%%%%%%%%%%%%%%%%%%%%%%%%%%%%%%%%%%%%%%%%%%%%%%%%%%%%%%%%%%%%%%%%%%%%%%%%%%%%%%%%%
\section{The Cohomology of the $\DDelta$ Complex.}

We continue with the notation of the last section. Let $R$ be a Noetherian ring, let $\f$ be a permutable regular sequence of codimension $c\geq 1$, and let $f=\prod_{i=1}^c f_i$. We denote by $\DDelta^\bullet_{\fR}(R)^+$ the augmented chain complex equal to $\DDelta^\bullet_{\fR}(R)$ in degrees $\leq c$, with augmentation $\DDelta^{c+1}_{\fR}(R)^+ := H^c_{\fR}(R)$ and differential $\partial^c:\DDelta^c_{\f}(R)^+\to \DDelta^{c+1}_{\f}(R)^+$ given by
the multiplication by $f$ map $H^c_{\f}(R)\to H^c_{\f}(R)$.

%%%%%%%%%%%%%%%%%%%%%%%%%%%%%%%%%%%%%%%%%%%%%%%%%%%%%%%%%%%%%%%%%%%%%%%%%%%%%%%%%%%%%%%%%%%%%%%%%%%
% H^1_h LEMMA
%%%%%%%%%%%%%%%%%%%%%%%%%%%%%%%%%%%%%%%%%%%%%%%%%%%%%%%%%%%%%%%%%%%%%%%%%%%%%%%%%%%%%%%%%%%%%%%%%%%
\begin{lemma}\label{iterated local cohomology}
   Let $R$ be a Noetherian ring, let $\f$ be a permutable regular sequence of codimension $c\geq 1$. Suppose $H^i(\DDelta^\bullet_{\fR}(R)^+) = 0$ for all $1\leq i\leq c+1$, and let $h \in R$ extend $\f$ to a permutable regular sequence $f_1, \dots, f_c,h$ of codimension $c+1$. Then $H^i(H^1_{h}(\DDelta_{\fR}(R)^+)) = 0$ for all $1\leq i\leq c+1$.
\end{lemma}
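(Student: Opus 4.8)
Throughout write $D^\bullet:=\DDelta^\bullet_{\f}(R)^+$, a complex concentrated in cohomological degrees $0,\dots,c+1$ with $D^{c+1}=H^c_{\f}(R)$ and $\partial^c$ the multiplication-by-$f$ map. The plan is to produce a short exact sequence of complexes
\[
0\longrightarrow D^\bullet\longrightarrow D^\bullet_h\longrightarrow H^1_h(D^\bullet)\longrightarrow 0 ,
\]
where $D^\bullet_h$ is the termwise localization at $h$, and then to extract the vanishing of $H^i(H^1_h(D^\bullet))$ from its long exact cohomology sequence.

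The key point is that $h$ is a nonzerodivisor on each term $D^i$, i.e.\ $\Gamma_h(D^i)=0$ for all $i$. By the identifications \eqref{anncoh}, every direct summand of $D^i$ has the form $(0:_{H^c_{\f}(R)}\f_{\widetilde S})\cong H^{|S|}_{\f_S}(R/\f_{\widetilde S})$ for some $S\subseteq[c]$ with $|S|=i$; this includes the augmentation term $D^{c+1}=H^c_{\f}(R)$, which is the case $S=[c]$. Each such module is a direct limit $\varinjlim_a R/(\f^{[a]}_S,\f_{\widetilde S})$ whose transition maps -- multiplication by $\prod_{j\in S}f_j^{\,a'-a}$ -- are injective, by the colon properties of the regular sequence $\f_S$ on $R/\f_{\widetilde S}$ recorded in Section~2. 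Since $f_1,\dots,f_c,h$ is a permutable regular sequence, the list $(f_i)_{i\in\widetilde S}$ followed by $(f_j^{\,a})_{j\in S}$ followed by $h$ is again a regular sequence -- it is obtained from a permutation of $f_1,\dots,f_c,h$ by raising some terms to the power $a$, and both operations preserve regularity -- so $h$ is a nonzerodivisor on $R/(\f^{[a]}_S,\f_{\widetilde S})$ for every $a\ge 1$. As $\Gamma_h$ commutes with filtered colimits, $\Gamma_h(D^i)=0$ for all $i$.

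Granting this, applying the four-term exact sequence of functors $0\to\Gamma_h(-)\to\mathrm{id}\to(-)_h\to H^1_h(-)\to 0$ termwise to $D^\bullet$ yields the short exact sequence of complexes displayed above, with $H^1_h(D^\bullet)$ the termwise cokernel of $D^\bullet\hookrightarrow D^\bullet_h$ equipped with the induced differential. Localization is exact, so $H^i(D^\bullet_h)\cong H^i(D^\bullet)_h$, which vanishes for $1\le i\le c+1$ by hypothesis. The long exact cohomology sequence then supplies, for $1\le i\le c$, the exact segment
\[
0=H^i(D^\bullet)_h\longrightarrow H^i\!\left(H^1_h(D^\bullet)\right)\longrightarrow H^{i+1}(D^\bullet)=0 ,
\]
and for $i=c+1$ the exact segment $0=H^{c+1}(D^\bullet)_h\to H^{c+1}(H^1_h(D^\bullet))\to H^{c+2}(D^\bullet)=0$, the final term being zero because $D^\bullet$ vanishes in degree $c+2$. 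Hence $H^i(H^1_h(\DDelta_{\f}(R)^+))=0$ for all $1\le i\le c+1$.

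The one step demanding genuine care is the nonzerodivisor claim; the rest is formal. It is also the only place where permutability of $f_1,\dots,f_c,h$ is needed -- precisely in order to reorder the sequence so that $h$ sits after the generators of $(\f^{[a]}_S,\f_{\widetilde S})$.
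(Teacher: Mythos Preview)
Your proof is correct and follows essentially the same route as the paper's: both use the two-term \v{C}ech double complex $0\to D^\bullet\to D^\bullet_h\to 0$, with the paper phrasing the comparison via a spectral sequence and you via the long exact sequence of the resulting short exact sequence of complexes. Your careful verification that $\Gamma_h(D^i)=0$ (i.e., that $h$ is a nonzerodivisor on every term) is precisely what justifies the paper's claim that the vertical $E_1$ page has ``one nonzero row,'' a detail the paper leaves implicit.
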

\begin{proof}
  For the sake of notational convenience, write $\DDelta^\bullet = \DDelta_{\fR}^\bullet(R)^+$. We compute $H^1_{h}$ via the double complex $\check{C}(h;R)\otimes_R \DDelta^\bullet$, i.e. $0 \to \DDelta^\bullet \to (\DDelta^\bullet)_h \to 0$, as shown below.
  \[
  \begin{tikzcd}
    & 0 \ar[d]              &     0 \ar[d]                &                       &                   0 \ar[d]        &               0 \ar[d]               & \\
    0       \ar[r]& \DDelta^0 \ar[d]\ar[r]&     \DDelta^1   \ar[d]\ar[r]& \cdots \ar[r]   & \DDelta^c \ar[d]\ar[r, "f"]  & \DDelta^{c+1} \ar[d]\ar[r]& 0\\
    0       \ar[r]& (\DDelta^0)_h \ar[d]\ar[r]&     (\DDelta^1)_h   \ar[d]\ar[r]& \cdots \ar[r]   & (\DDelta^c)_h \ar[d]\ar[r, "f"]  & (\DDelta^{c+1})_h \ar[d]\ar[r]& 0\\
    & 0                     &     0                       &                       &                   0               &               0.                    & 
  \end{tikzcd}
  \]
  
  By hypothesis, $H^i(\DDelta^\bullet) = 0$ for all $i$, so $H^i((\DDelta^\bullet)_h) = 0$ for all $i$ as well. If we first take cohomology horizontally, we therefore obtain $E_1=E_\infty=0$. The cohomology of the totalization is therefore zero. If, on the other hand, we take vertical cohomology first, then we arrive at a double complex with one nonzero row, 
  \[
  \begin{tikzcd}
    0\ar[r]& H^1_{h}(\DDelta^0) \ar[r]& \cdots \ar[r]& H^1_{h}(\DDelta^{c+1})  \ar[r]& 0.
  \end{tikzcd}
  \]
  
The modules $H^i(H^1_{(h)}(\DDelta^\bullet))$ appear now as the horizontal cohomology, with $E_2=E_\infty$, and since only a single row is nonzero, they yield the cohomology of the totalization, which vanishes.
\end{proof}

We are now ready to prove the main theorem of this section.

%%%%%%%%%%%%%%%%%%%%%%%%%%%%%%%%%%%%%%%%%%%%%%%%%%%%%%%%%%%%%%%%%%%%%%%%%%%%%%%%%%%%%%%%%%%%%%%%%%%
% EXACTNESS THM
%%%%%%%%%%%%%%%%%%%%%%%%%%%%%%%%%%%%%%%%%%%%%%%%%%%%%%%%%%%%%%%%%%%%%%%%%%%%%%%%%%%%%%%%%%%%%%%%%%%
\begin{theorem}\label{DDelta is exact}
  Let $R$ be a Noetherian ring, and let $\f$ be a permutable regular sequence of codimension $c\geq 1$. Then $H^i(\DDelta_{\fR}^\bullet(R)) = 0$ for $0 \le i < c$, and $H^c(\DDelta_{\fR}^\bullet(R)) \cong H^c_{\fR}(R)$, with augmentation map isomorphic to multiplication by $f := \prod_1^c f_i$. 
  
  If $R$ has prime characteristic $p > 0$, then $\DDelta_{\fR}^\bullet(R)$ is a complex of $R\langle F \rangle$-modules considered with their Fedder actions, and the induced Frobenius action on the augmentation $H^c_{\fR}(R) \cong H^c(\DDelta_{\fR}^\bullet(R))$ is the natural action.
\end{theorem}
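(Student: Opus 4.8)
The plan is to induct on the codimension $c$, using the filtration $\DDelta^\bullet_{\f}(R)_n$ from Definition~\ref{filtration} together with the decomposition from Proposition~\ref{quotcomp}. The base case $c=1$ is the hypersurface computation recalled in the introduction: $\DDelta^\bullet_f(R)^+$ is the two-term complex $0\to R/f\to H^1_f(R)\xrightarrow{f}H^1_f(R)\to 0$, whose middle cohomology vanishes (the image of $R/f$ is exactly $(0:_{H^1_f(R)}f)$) and whose cokernel is $H^1_f(R)$ with the natural action, since on $\check{C}$ear classes the Fedder action $f^{p-1}F_{\text{nat}}$ on $\lbrak r/f\rbrak$ lands in the submodule where it simply becomes $F_{\text{nat}}$ after passing to the quotient.

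For the inductive step with $n=c$, I would feed the short exact sequence of complexes
\[
0\to K^\bullet_c\to \DDelta^\bullet_{\f}(R)_c\to \DDelta^\bullet_{\f}(R)_{c-1}\to 0
\]
(and its augmented version) into the associated long exact sequence in cohomology. By Proposition~\ref{quotcomp}(1), $\DDelta^\bullet_{\f}(R)_{c-1}=\DDelta^\bullet_{\f_{[c-1]}}(R/f_c)$, which by the inductive hypothesis (applied over the regular ring $R/f_c$ is \emph{not} regular, so I instead apply the inductive hypothesis over $R$ to the sequence $\f_{[c-1]}$ after noting $H^{c-1}_{\f_{[c-1]}}(R/f_c)\cong(0:_{H^c_{\f}(R)}f_c)$, or more cleanly, invoke the Noetherian-ring version of the theorem already being proved) has cohomology concentrated in top degree $c-1$, equal to $H^{c-1}_{\f_{[c-1]}}(R/f_c)$ with the natural action and augmentation $f_{[c-1]}$. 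By Proposition~\ref{quotcomp}(2), $K^\bullet_c=H^1_{f_c}(\DDelta^\bullet_{\f_{[c-1]}}(R/f_c))[-1]$, and applying Lemma~\ref{iterated local cohomology} to the inductively-known exactness of $\DDelta^\bullet_{\f_{[c-1]}}(R/f_c)^+$ gives that $H^1_{f_c}$ of it has vanishing cohomology in all positive degrees. The shift by $[-1]$ then places this contribution so that, after chasing the long exact sequence, $H^i(\DDelta^\bullet_{\f}(R)^+)=0$ for $1\le i\le c+1$; the only surviving cohomology is $H^0=0$ (the map $R/\f\to \DDelta^1$ is injective) and the augmentation cokernel $H^{c+1}$, which must therefore vanish too once one checks the top map $\DDelta^c\xrightarrow{f}H^c_{\f}(R)$ is surjective — so in fact one reads $H^c(\DDelta^\bullet_{\f}(R))\cong H^c_{\f}(R)$ directly from the unaugmented complex and the exactness of the augmented one.

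To identify the Frobenius action on $H^c(\DDelta^\bullet_{\f}(R))$ as the \emph{natural} action rather than the Fedder action, I would track it through the same long exact sequence, using that all the connecting maps are $R\ncfrob$-linear because the short exact sequence of complexes is Frobenius stable (each term is an annihilator of a subsequence inside $H^c_{\f}(R)$, preserved by $F_{\text{fed}}$ by the discussion following Proposition~\ref{fedemb}). Concretely, at the top of the complex the augmentation $\partial^c\colon \DDelta^c_{\f}(R)=H^c_{\f}(R)_{\text{fed}}\xrightarrow{f}H^c_{\f}(R)_{\text{nat}}$ is $R\ncfrob$-linear precisely because $F_{\text{nat}}\circ(f\cdot)=f^p F_{\text{nat}}=(f\cdot)\circ(f^{p-1}F_{\text{nat}})=(f\cdot)\circ F_{\text{fed}}$; since the augmented complex is exact, $H^c(\DDelta^\bullet_{\f}(R))$ is identified with the image of $\partial^c$, equivalently with $H^c_{\f}(R)_{\text{nat}}$ itself (as $f\cdot$ is injective on $H^c_{\f}(R)$, being multiplication by a nonzerodivisor on a module supported at $\f$ — wait, it is not injective, so instead one uses that $\partial^c$ is surjective and the kernel is $\partial^{c-1}(\DDelta^{c-1})=(0:_{H^c_{\f}(R)}f)$, giving $H^c_{\f}(R)/(0:f)\cong H^c_{\f}(R)$ via $f$, compatibly with $F_{\text{nat}}$).

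The main obstacle I anticipate is bookkeeping the $[-1]$ shift and the sign conventions so that the long exact sequence genuinely forces all intermediate cohomology to vanish and does not merely relate $H^i(\DDelta^\bullet_{\f}(R)_c)$ to a possibly-nonzero combination of $H^{i}(\DDelta^\bullet_{\f}(R)_{c-1})$ and $H^{i-1}(K^\bullet_c)$ — one must verify the connecting homomorphism $H^{i-1}(\DDelta^\bullet_{\f}(R)_{c-1})\to H^i(K^\bullet_c)$ is an isomorphism in the one degree ($i=c$) where both sides could be nonzero. This amounts to checking that the composite of the inductive augmentation $H^{c-1}_{\f_{[c-1]}}(R/f_c)\xrightarrow{f_{[c-1]}}H^{c-1}_{\f_{[c-1]}}(R/f_c)$ with the relevant inclusion into $K^c_c=H^1_{f_c}(H^{c-1}_{\f_{[c-1]}}(R/f_c))$ agrees, up to the known vanishing, with the boundary map in the $\check{C}$ear complex $0\to \DDelta^\bullet_{\f_{[c-1]}}(R/f_c)\to (\DDelta^\bullet_{\f_{[c-1]}}(R/f_c))_{f_c}\to 0$; this is a direct-limit-of-diagrams verification analogous to, but more involved than, the one in Proposition~\ref{fedemb}.
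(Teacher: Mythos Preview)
Your overall strategy matches the paper's: induct on $c$, use the short exact sequence of complexes coming from Proposition~\ref{quotcomp}, feed it into a long exact sequence, and invoke Lemma~\ref{iterated local cohomology} together with the inductive hypothesis on the shorter sequence. The Frobenius identification at the end is also essentially the paper's computation.

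There are, however, two genuine problems in your execution. First, a misapplication of Proposition~\ref{quotcomp}(2): with $n=c$ one has $\widetilde{[c]}=\emptyset$, so $K^\bullet_c = H^1_{f_c}\bigl(\DDelta^\bullet_{\f_{[c-1]}}(R)\bigr)[-1]$, not $H^1_{f_c}\bigl(\DDelta^\bullet_{\f_{[c-1]}}(R/f_c)\bigr)[-1]$. This matters, because Lemma~\ref{iterated local cohomology} then applies to $\DDelta^\bullet_{\f_{[c-1]}}(R)^+$ over $R$ (where $f_c$ genuinely extends $\f_{[c-1]}$ to a permutable regular sequence), not to the complex over $R/f_c$. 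Your parenthetical worry about $R/f_c$ not being regular is a red herring anyway: the theorem only assumes $R$ Noetherian, so the inductive hypothesis for $\DDelta^\bullet_{\f_{[c-1]}}(R/f_c)$ is available directly.

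Second, and more seriously, you misstate what must be shown about the connecting homomorphism. In the long exact sequence the only nontrivial portion is
\[
0\to H^{c-1}(\DDelta^\bullet_{\f}(R))\to H^{c-1}(\DDelta^\bullet_1)\xrightarrow{\ \delta\ } H^{c}(K^\bullet_c)\to H^{c}(\DDelta^\bullet_{\f}(R))\to 0,
\]
and you need $\delta$ to be \emph{injective}, not an isomorphism: its cokernel is precisely $H^c(\DDelta^\bullet_{\f}(R))$, which you want to identify with $H^c_{\f}(R)\neq 0$. The paper establishes injectivity of $\delta$ by an explicit diagram chase inside $M=H^c_{\f}(R)$, showing that if $\eta\in(0:_M f_c)$ has $\delta(\lbrak\eta\rbrak)=0$ then $\eta\in\sum_{j<c}(0:_M f_j)$, which is the image of $\partial^{c-2}_{\DDelta_1}$. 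Your proposed verification (matching $\delta$ with an augmentation-plus-inclusion composite) is aiming at the wrong target and would not close the argument. Once injectivity is in hand, the identification $H^c(\DDelta^\bullet_{\f}(R))\cong H^c_{\f}(R)$ and the form of the augmentation follow as in the paper by computing $\mathrm{im}(\partial^{c-1})=(0:_M f)$.
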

\begin{proof}
  The differentials of $\DDelta^\bullet_{\fR}(R)$ are direct sums of inclusions of submodules of $M=H^c_{\f}(R)$, and by Proposition \ref{fedemb} these inclusions are Fedder-action linear in characteristic $p>0$. Our statement about the induced Frobenius action on $H^c(\DDelta^\bullet_{\fR}(R))$ is proven at the end of this argument; the bulk of this proof is calculation of the cohomology. 
    
  We proceed by by induction on $c$, with base case $c = 1$. With $f=f_1$, the complex $\DDelta^\bullet_{f}(R)$ is    
  \[
  \begin{tikzcd}
    0 \ar[r] & R/f \ar[rrr, "(r + fR) \mapsto \lbrak r/f \rbrak"]& & & H^1_{f}(R) \ar[r] & 0.
  \end{tikzcd}
  \]
  
  The map $R/f\to H^1_f(R)$ shown above is clearly injective, so $H^0(\DDelta^\bullet_f(R))=0$. Moreover, the image of $R/f\to H^1_f(R)$ is precisely the kernel of multiplication by $f$. Multiplication by $f$ on $H^1_f(R)$ is surjective, and the exactness of $0\to R/f\to H^1_f(R)\xrightarrow{f} H^1_f(R)\to 0$ implies that $H^1(\DDelta^\bullet_f(R))=H^1_f(R)$.
  
  Now assume the theorem has been proven for any permutable regular sequence of codimension $c \geq 1$ in a Noetherian ring. Let $\f,h=f_1, \dots, f_c, h \in R$ be a permutable regular sequence of codimension $c+1$. From Proposition \ref{quotcomp}, $\DDelta^\bullet_{{\fR}}(R/h)$ is the quotient complex $\DDelta^\bullet_{\f,h}(R)_c$ of $\DDelta^\bullet_{\fR,h}(R)=\DDelta^\bullet_{\fR}(R)_{c+1}$. The kernel $K^\bullet_{c+1}$ of this quotient is isomorphic to $H^1_{h}(\DDelta^\bullet_{{\fR}}(R))[-1]$, giving us the short exact sequence of complexes shown below.
  \begin{equation}\label{sescomp}
  \begin{tikzcd}
    0 \ar[r] & H^1_{h}(\DDelta^\bullet_{{\fR}}(R))[-1] \ar[r] & \DDelta^\bullet_{\fR,h}(R) \ar[r] & \DDelta^\bullet_{{\fR}}(R/h) \ar[r] & 0,
  \end{tikzcd}
  \end{equation}
  
  Applying \cref{iterated local cohomology} and the induction hypothesis, we see $H^i\left(H^1_{h}(\DDelta^\bullet_{{\fR}}(R))[-1]\right)=0$ for $i\leq c$, and that $H^{c+1}\left(H^1_{h}(\DDelta^\bullet_{{\fR}}(R))[-1]\right) = H^{c+1}(\DDelta_{{\fR,h}}(R)) = H^c_{{\fR}}(R)$. Likewise, we may apply the induction hypothesis to the $\DDelta^\bullet$ complex of the regular sequence $\f$ of codimension $c$ in $R/h$ to obtain $H^i(\DDelta^\bullet_{{\fR}}(R/h)) = 0$ for $i<c$ and $H^c(\DDelta^\bullet_{{\fR}}(R/h)) = H^c_{{\fR}}(R/h)$.
  
  We now study the long exact sequence in cohomology from the short exact sequence \eqref{sescomp}. To simplify notation, let $\DDelta^\bullet := \DDelta^\bullet_{\fR,h}(R)$, $\DDelta^\bullet_1 := \DDelta^\bullet_{{\fR}}(R/h)$, and $K^\bullet := H^1_{h}(\DDelta^\bullet_{{\fR}}(R))[-1]$. We obtain
  \[
  \begin{tikzcd}
    \cdots \ar[r] & H^i(K^\bullet) \ar[r]& H^i(\DDelta^\bullet) \ar[r]& H^i(\DDelta^\bullet_1) \ar[r, "\delta"]& H^{i+1}(K^\bullet) \ar[r] & \cdots
  \end{tikzcd}
  \]  
  We immediately see that $H^i(\DDelta^\bullet) = 0$ for $i < c$. Using that $H^c(K^\bullet) = 0$, we have an exact sequence  
  \begin{equation}\label{delta sequence}
    \begin{tikzcd}
      0 \ar[r]& H^c(\DDelta^\bullet) \ar[r]& H^c(\DDelta^\bullet_1) \ar[r, "\delta"]& H^{c+1}(K^\bullet) \ar[r] & H^{c+1}(\DDelta^\bullet) \ar[r]& 0.
    \end{tikzcd}
  \end{equation}
  
  We claim that $\delta$ is injective. To see this, we start with recalling the construction of $\delta$. We begin with the map from row $c$ to row $c+1$ in the short exact sequence of complexes.
\[
  \begin{tikzcd}
    0 \ar[r]& K^c\ar[r]\ar[d,"\partial^c_K"]& \DDelta^c \ar[r]\ar[d,"\partial^c_{\DDelta}"]& \DDelta_1^c\ar[r]\ar[d,"\partial^c_{\DDelta_1}"]& 0\\
    0 \ar[r]& K^{c+1}\ar[r]& \DDelta^{c+1} \ar[r]& 0\ar[r]& 0
  \end{tikzcd}
  \]
Let $M=H^{c+1}_{\f,h}(R)$. We identify $\DDelta^c_1=(0:_M h)$ and $K^c=\bigoplus_{i=1}^c (0:_M f_i)$. Let $\iota:\DDelta^c_1\hookrightarrow \DDelta^c$ denote the obvious splitting.
\[
  \begin{tikzcd}
    0 \ar[r]& \bigoplus_{i=1}^c (0:_M f_i)\ar[r]\ar[d,"\partial^c_K"]& (0:_Mh) \oplus\left(\bigoplus_{i=1}^c (0:_M f_i)\right) \ar[r,shift right=1]\ar[d,"\partial^c_{\DDelta}"]& (0:_M h)\ar[r]\ar[l,"\iota"',shift right = 1]\ar[d,"\partial^c_{\DDelta_1}"]& 0\\
    0 \ar[r]& M\ar[r]& M \ar[r]& 0\ar[r]& 0
  \end{tikzcd}
  \]
  A class $\lbrak \eta \rbrak_{\DDelta_1} \in H^c(\DDelta^\bullet_1)$ is represented by $\eta \in (0:_M h)$, where we write a subscript $\lbrak \cdots \rbrak_C$ to indicate the complex $C^\bullet$ with respect to which we're taking cohomology. By definition,  
  \[\delta(\lbrak \eta \rbrak_{\DDelta_1}) = \lbrak \partial^c_{\DDelta}(\iota(\eta)) \rbrak_K \in H^{c+1}(K).\]  
  If $\delta(\lbrak \eta \rbrak_{\DDelta_1}) = 0$, then $\partial^c_{\DDelta}(\iota(\eta))$ is in the image of $\partial_K^c$, 
  which is $\sum_{j=1}^c (0:_M f_j)$. Note that $\partial^c_{\DDelta}\circ \iota$ is just the inclusion map $(0:_M h)\hookrightarrow M$, possibly up to a sign change, so to say that $\partial^c_{\DDelta}(\iota(\eta))\in \sum_{j=1}^c (0:_M f_j)$ means precisely that
  \[ \eta \in (0:_M h) \cap \left(\sum_{j=1}^c (0:_M f_j) \right)=\text{Im}(\partial^{c-1}_{\DDelta_1}), \]  
  Thus, $\lbrak \eta \rbrak_{\DDelta_1} = 0$, and $\delta$ is injective.
  
  We conclude that $H^c(\DDelta^\bullet) = 0$ from the sequence \eqref{delta sequence}. Following the reasoning of the last paragraph, the image of $\partial^c_{\DDelta}$ is $(0:_M h) + \sum_{j=1}^c (0:_M f_j)$, which is the kernel of multiplication by $hf = h \left(\prod_1^c f_j\right)$. Thus, the augmentation map $\DDelta^{c+1}\twoheadrightarrow H^{c+1}(\DDelta^\bullet)$ is (by definition) the quotient $M\twoheadrightarrow M/(0:_M fh)$. Since multiplication by $fh$ is surjective on $M$, this provides an isomorphism $\varphi: H^{c+1}(\DDelta^\bullet)\to M$,
  \[
  \begin{tikzcd}
  0\ar[r]& \text{Im}(\partial^c_{\DDelta})\ar[d,equals]\ar[r] & \DDelta^{c+1}\ar[d,equals]\ar[r,"\text{aug.}"]& H^{c+1}(\DDelta^\bullet)\ar[d,"\varphi"]\ar[r]& 0\\
  0\ar[r]& (0:_M fh)\ar[r]& M\ar[r,"fh"]& M\ar[r]& 0
  \end{tikzcd}
  \]
  So, under the isomorphism $\varphi$ that identifies $H^{c+1}(\DDelta^\bullet)$ with $M=H^{c+1}_{\f,h}(R)$, the augmentation map $\DDelta^{c+1}\xrightarrow{\text{aug.}}H^{c+1}(\DDelta^\bullet)$ is isomorphic to the multiplication map $M\xrightarrow{fh} M$, as claimed. Except for the final statement about the induced Frobenius action in characteristic $p > 0$, we have proven the theorem. 
    
  The final statement comes down to a direct calculation. For a given representative $\eta\in \DDelta^{c+1}=H^{c+1}_{\f,h}(R)_{\text{fed}}$, the induced Frobenius action $\overline{F}$ on $H^{c+1}(\DDelta^\bullet)$ sends $\lbrak \eta\rbrak_{\DDelta}$ to $\lbrak F_{\text{fed}}(\eta)\rbrak_{\DDelta}$. Under the identification $\varphi: H^{c+1}(\DDelta^\bullet)\xrightarrow{\sim} H^{c+1}_{\f,h}(R)$, the augmentation map $\eta\mapsto \lbrak \eta\rbrak_{\DDelta}$ is the multiplication $\eta\mapsto (fh) \eta$. Thus,
  \[
  \overline{F}((fh)\eta) = (fh)F_{\text{fed}}(\eta) = (fh)^p F_{\text{nat}}(\eta) = F_{\text{nat}}((fh)\eta)
  \]
so $\overline{F} = F_{\text{nat}}$, as desired.
\end{proof}

%%%%%%%%%%%%%%%%%%%%%%%%%%%%%%%%%%%%%%%%%%%%%%%%%%%%%%%%%%%%%%%%%%%%%%%%%%%%%%%%%%%%%%%%%%%%%%%%%%%
%%%%%%%%%%%%%%%%%%%%%%%%%%%%%%%%%%%%%%%%%%%%%%%%%%%%%%%%%%%%%%%%%%%%%%%%%%%%%%%%%%%%%%%%%%%%%%%%%%%
% CLOSED SUPPORT
%%%%%%%%%%%%%%%%%%%%%%%%%%%%%%%%%%%%%%%%%%%%%%%%%%%%%%%%%%%%%%%%%%%%%%%%%%%%%%%%%%%%%%%%%%%%%%%%%%%
%%%%%%%%%%%%%%%%%%%%%%%%%%%%%%%%%%%%%%%%%%%%%%%%%%%%%%%%%%%%%%%%%%%%%%%%%%%%%%%%%%%%%%%%%%%%%%%%%%%
\section{An Application to Closed Support}
Given a Noetherian ring $S$ and an ideal $I$, the local cohomology modules $H^i_I(S)$ for $i\leq \text{ht}(I)+1$ are, in a sense, fully general among all local cohomology modules of the form $H^\alpha_{\mathfrak{a}}(S)$. Given and ideal $I$ and an index $i\geq 0$, by \cite[Theorem 4.1]{LewisLocalCohomologyParameter}, there exists some possibly larger ideal $I^\prime\supseteq I$ such that $i\leq \text{ht}(I^\prime)+1$ with the functor $H^i_{I^\prime}(-)$ naturally isomorphic to $H^i_I(-)$. In the case where $S$ is Cohen-Macaulay (cf. \cite[Theorem 3]{HellusLocalCohomology} when $S$ is local) our attention is therefore restricted to the cases $H^{\text{ht}(I)}_I(S)$ and $H^{\text{ht}(I)+1}_I(S)$. It is easy to show that $H^{\text{ht}(I)}_I(S)$ has a finite set of associated primes, so questions about the support or associated primes of the local cohomology of a Cohen-Macaulay ring can, without loss of generality, be posed for modules of the form $H^{\text{ht}(I)+1}_I(S)$.

It remains an open question \cite[Question 2]{hochreview} whether the local cohomology of a complete intersection ring must have closed support. Given a regular ring $R$, a regular sequence $\f=f_1,\ldots,f_c$, and an ideal $I$ containing ${\fR}$, the closed support problem for a local complete intersection ring is, by the preceding discussion, determined by the behavior of modules of the form $H^{\text{ht}(I/{\fR})+1}_{I/{\fR}}(R/{\fR})$. Indeed, the hypersurface support theorems of Hochster and N\'{u}\~{n}ez-Betancourt \cite[Corollary 4.13]{HochsterNunezBetancourt} or Katzman and Zhang \cite[Theorem 7.1]{KatzmanZhang} may be interpreted as the statement that in prime characteristic $p>0$, all modules of the form $H^{\text{ht}(I/f)+1}_{I/f}(R/f)$ have closed support. 

In this section, we will show that for a permutable regular sequence $\f=f_1,\ldots,f_c$ in a regular ring $R$ of prime characteristic $p>0$, the module $H^{\text{ht}(I/{\fR})+c}_{I/{\fR}}(R/{\fR})$ has closed support for any ideal $I$ satisfying the vanishing hypothesis $H^i_I(R)=0$ for $\text{ht}(I)<i<\text{ht}(I)+c$.

\begin{lemma}
Let $R$ be a Noetherian ring of prime characteristic $p>0$, and let $M$ be a finitely generated $R\ncfrob$-module. The support of $M$ is closed.
\end{lemma}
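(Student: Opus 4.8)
The plan is to show that the support of $M$ coincides with the support of a suitable finitely generated $R$-submodule, after which one simply invokes the standard fact that a finitely generated module over a Noetherian ring has support equal to the closed set $V(\ann_R(-))$ cut out by its annihilator.

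First I would fix a finite generating set $m_1,\dots,m_n$ of $M$ as an $R\ncfrob$-module. Writing a general element of $R\ncfrob$ as a polynomial $\sum_e r_eF^e$ with $r_e\in R$, and using the identity $F^e(rm)=r^{p^e}F^e(m)$, one obtains $M=\sum_{e\ge 0}N_e$, where $N_e:=\sum_{i=1}^nR\,F^e(m_i)$ is a finitely generated $R$-submodule of $M$ and $N_0=\sum_i Rm_i$. The heart of the argument is the inclusion $\Supp N_e\subseteq\Supp N_0$ for all $e\ge 0$: if $r$ annihilates $N_0$, then $rm_i=0$ for each $i$, so $r^{p^e}F^e(m_i)=F^e(rm_i)=0$, and hence $r^{p^e}\in\ann_R(N_e)$; therefore $\ann_R(N_0)\subseteq\sqrt{\ann_R(N_e)}$, and since $N_0$ and $N_e$ are finitely generated this gives $\Supp N_e=V(\ann_R(N_e))\subseteq V(\ann_R(N_0))=\Supp N_0$.

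To finish, I would use that support is compatible with (even infinite) sums of submodules: a prime $\mathfrak p$ lies in $\Supp M=\Supp\bigl(\sum_e N_e\bigr)$ if and only if $(N_e)_{\mathfrak p}\ne 0$ for some $e$, i.e.\ $\mathfrak p\in\bigcup_e\Supp N_e$. Combining this with the previous step yields
\[
\Supp M=\bigcup_{e\ge 0}\Supp N_e=\Supp N_0=V(\ann_R(N_0)),
\]
which is Zariski closed, as desired.

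There is no serious obstacle here; the only step that genuinely uses the prime-characteristic hypothesis — as opposed to merely finite generation over a noncommutative ring — is the containment $\Supp N_e\subseteq\Supp N_0$, whose point is that applying $F^e$ can only enlarge the annihilator up to radical, so the a priori increasing union of the closed sets $\Supp N_e$ cannot escape the fixed closed set $\Supp N_0$.
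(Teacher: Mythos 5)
Your argument is correct and is essentially the standard proof of this fact; the paper itself does not give an argument but simply cites \cite[Lemma 4.5]{HochsterNunezBetancourt}, whose proof proceeds along the same lines (decompose $M$ as the sum of the $R$-submodules generated by the $F^e$-images of a finite $R\ncfrob$-generating set, and observe that applying $F^e$ can only shrink support because it raises annihilators to Frobenius powers). All three steps you use — the decomposition $M=\sum_e N_e$, the containment $\ann_R(N_0)\subseteq\sqrt{\ann_R(N_e)}$, and the compatibility of support with sums of submodules via flatness of localization — hold as stated.
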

\begin{proof}
See \cite[Lemma 4.5]{HochsterNunezBetancourt}.
\end{proof}

An $R\ncfrob$-linear homomorphic image of a finitely generated $R\ncfrob$ module is still finitely generated, and thus, still has closed support. However, we caution that $R\ncfrob$ is not left-Noetherian, so $R\ncfrob$-submodules of a finitely generated $R\ncfrob$-module need not be finitely generated.

\begin{lemma}
  Let $R$ be a regular ring of prime characteristic $p>0$, let $\f=f_1,\ldots,f_c$ be a regular sequence, and let $I$ be an ideal containing ${\fR}$. Denote by $H^c_{\fR}(R)_{\text{nat}}$ the $R\ncfrob$-module $H^c_{\fR}(R)$ equipped with its natural action. Then for any $i\geq 0$, the $R\ncfrob$-module $H^i_I(H^c_{\fR}(R)_{\text{nat}})$ (with the Frobenius action induced from $H^c_{\fR}(R)_{\text{nat}}$) is finitely generated over $R\ncfrob$. Moreover, the module $H^i_I(H^c_{\fR}(R))$ has a finite set of associated primes.
\end{lemma}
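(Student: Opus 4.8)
The plan is to prove the claim by reducing to Lyubeznik's finiteness theorem for $F$-finite $F_R$-modules via the unit structure of the natural action. First I would recall the key fact, established in the discussion preceding this lemma, that the natural action on $H^c_{\fR}(R)$ is a \emph{unit} $R\ncfrob$-module when $R$ is regular: its structure morphism $\mathcal{F}_R(H^c_{\fR}(R))\to H^c_{\fR}(R)$ is the identity under the canonical identification. Moreover $H^c_{\fR}(R)_{\text{nat}}$ is finitely generated over $R\ncfrob$ --- for instance by the image of $\Ext^c_R(R/\fR,R)$, or more simply by the \v{C}ech class $\lbrak 1/f\rbrak$. Thus $H^c_{\fR}(R)_{\text{nat}}$ is precisely an $F$-finite $F_R$-module in the sense of Lyubeznik.

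The second step is to invoke Lyubeznik's theorem that local cohomology of an $F$-finite $F_R$-module is again $F$-finite: for any ideal $I\subseteq R$ and any $i\geq 0$, the module $H^i_I(H^c_{\fR}(R)_{\text{nat}})$, equipped with the induced Frobenius action, is a finitely generated unit $R\ncfrob$-module. This is exactly the statement quoted in the introduction of the paper (``for any ideal $I\subseteq R$, $i\geq 0$, the $R\ncfrob$-module structure induced by $M$ makes $H^i_I(M)$ finitely generated''), applied with $M = H^c_{\fR}(R)_{\text{nat}}$. This gives the first assertion directly.

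For the ``moreover'' clause, I would observe that the underlying $R$-module of $H^i_I(H^c_{\fR}(R))$ does not depend on which Frobenius action we place on $H^c_{\fR}(R)$, since local cohomology $H^i_I(-)$ is a functor on $R$-modules. Hence $\Ass_R H^i_I(H^c_{\fR}(R))$ is the same set whether we use the Fedder action or the natural action. Having just shown $H^i_I(H^c_{\fR}(R)_{\text{nat}})$ is finitely generated over $R\ncfrob$, we apply Lyubeznik's result that a finitely generated unit $R\ncfrob$-module (equivalently, an $F$-finite $F_R$-module) has only finitely many associated primes --- again as recalled in the introduction --- to conclude that $\Ass_R H^i_I(H^c_{\fR}(R))$ is finite.

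The main obstacle, such as it is, is purely bookkeeping: one must be careful that the induced action on $H^i_I$ really is the one making Lyubeznik's machinery apply, i.e. that $H^i_I$ of a unit $R\ncfrob$-module is again unit (not merely finitely generated as an $R\ncfrob$-module). This is where regularity of $R$ is essential --- it makes $\mathcal{F}_R(-)$ exact, so that $\mathcal{F}_R$ commutes with the \v{C}ech complex computing $H^i_I$, which is precisely what propagates the unit property from $H^c_{\fR}(R)_{\text{nat}}$ to its iterated local cohomology. Once this compatibility is in hand the argument is immediate; the contrast with $H^c_{\fR}(R)_{\text{fed}}$, whose structure morphism has nontrivial kernel by Theorem~\ref{structuremorphism}, is exactly why we route through the natural action to control associated primes.
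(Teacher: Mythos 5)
Your proof is correct, but it takes a different route from the paper's. The paper's argument is more direct: since $\f$ is a regular sequence of codimension $c$, $H^j_{\fR}(R)=0$ for $j\neq c$, so the composite-functor spectral sequence collapses and gives an $R\ncfrob$-linear identification $H^i_I(H^c_{\fR}(R)_{\text{nat}})\cong H^{i+c}_I(R)_{\text{nat}}$. At that point the finite generation over $R\ncfrob$ is just the standard fact for local cohomology of a regular ring (\cite[Lemma 4.8]{HochsterNunezBetancourt}), and finiteness of associated primes is Huneke--Sharp/Lyubeznik for $H^{i+c}_I(R)$. You instead run Lyubeznik's general theorem that local cohomology of an $F$-finite $F_R$-module is again $F$-finite, applied to $M=H^c_{\fR}(R)_{\text{nat}}$, without collapsing the iterated local cohomology to a single $H^{i+c}_I(R)$. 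Both are legitimate: the paper's version is more elementary in that it avoids invoking the full $F_R$-module formalism and reduces everything to familiar statements about $H^\bullet_I(R)$ itself; your version is slightly more general-purpose, since it would apply unchanged to any $F$-finite unit input module, not just one that happens to concentrate local cohomology in a single degree. One small observation: the ``moreover'' clause in your write-up appeals to the fact that the underlying $R$-module of $H^i_I(H^c_{\fR}(R))$ is independent of the chosen Frobenius action, which is true and worth saying, but in the paper this point is rendered trivial by the explicit identification with $H^{i+c}_I(R)$.
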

\begin{proof}
  Since $H^j_{\fR}(R)=0$ unless $j=c$, we may identify $H^i_I(H^c_{\fR}(R))$ with $H^{i+c}_I(R)$. The $R\ncfrob$-module $H^i_I(H^c_{\fR}(R)_{\text{nat}})$ with action induced by $H^c_{\fR}(R)_{\text{nat}}$ is isomorphic to the $R\ncfrob$-module $H^{i+c}_I(R)_{\text{nat}}$. This is finitely generated over $R\ncfrob$ by \cite[Lemma 4.8]{HochsterNunezBetancourt}, and the statement about associated primes follows from \cite{hush} or \cite{lyufmod}.
\end{proof}

Assume from this point onward that the regular sequence $\f=f_1,\ldots,f_c\in R$ is permutable, and let $f=\prod_{i=1}^c f_i$. Given a subset $T\subseteq [c]$, recall that we use $\f_T$ to denote the subsequence of $\f$ indexed by $T$, and let $f_T=\prod_{i\in T}f_i$. For any such subset $T$ of size $|T|=b$, let $N^a_{\f_T}$ denote the kernel of the $i$th differential $\partial^a:\DDelta^a_{\f_T}(R)\to\DDelta^{a+1}_{\f_T}(R)$ when $a<b$, and let $N^b_{\f_T}$ denote the kernel of the augmentation map $\DDelta^b_{\f_T}(R)\to H^b(\DDelta^b_{\f_T}(R))$. By Theorem \ref{DDelta is exact}, we have the following:
\begin{itemize}
\item $N^1_{\f_T}=R/\f_T$.
\item $N^b_{\f_T}$ fits into an exact sequence \[0\to N^b_{\f_T}\to H^b_{\f_T}(R)_{\text{fed}}\xrightarrow{f_T}H^b_{\f_T}(R)_{\text{nat}}\to 0\]
\item For all values $1\leq a<b$, there is an exact sequence of the form
  \[0\to N^a_{\f_T}\to \bigoplus_{S\subseteq T,\,\, |S|={b-a}} H^a_{\f_{T-S}}(R/\f_S)\to N^{a+1}_{\f_T}\to 0.\]
  
\end{itemize}
  The compatibility of the differentials $\partial^i$ with the Fedder actions of each term in the $\DDelta^\bullet_{\f_T}(R)$ complex implies that each module of the form $N^a_{\f_T}$ carries an induced Frobenius action. The short exact sequences displayed above may therefore be understood over $R\ncfrob$, and the long exact sequences, including connecting homomorphisms, that result from applying a functor $\Gamma_I(-)$ for $I\supseteq {\fR}$ may also be understood over $R\ncfrob$, using the induced actions.
  
Note that the vanishing hypotheses of the following theorem are automatically satisfied if $R/I$ is Cohen-Macaulay (see \cite{PeskineSzpiroDimensionProjective}) or if $c=1$.

%%%%%%%%%%%%%%%%%%%%%%%%%%%%%%%%%%%%%%%%%%%%%%%%%%%%%%%%%%%%%%%%%%%%%%%%%%%%%%%%%%%%%%%%%%%%%%%%%%%
% SUPP THM
%%%%%%%%%%%%%%%%%%%%%%%%%%%%%%%%%%%%%%%%%%%%%%%%%%%%%%%%%%%%%%%%%%%%%%%%%%%%%%%%%%%%%%%%%%%%%%%%%%%
\begin{theorem}
  Let $R$ be a regular ring of prime characteristic $p>0$, let $\f=f_1,\ldots,f_c$ be a permutable regular sequence of codimension $c\geq 1$, and let $I\supseteq \f$ be an ideal such that $H^i_I(R)=0$ for $\text{ht}(I)<i<\text{ht}(I)+c$. The module $H^{\text{ht}(I/{\fR})+c}_{I/{\fR}}(R/{\fR})$ has Zariski closed support.
\end{theorem}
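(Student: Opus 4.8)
The plan is to run an inductive argument on $c$ that mirrors the Hochster--N\'u\~nez-Betancourt hypersurface strategy, but using the augmented $\DDelta^\bullet_{\f}(R)$ complex to break the passage from $R$ to $R/\f$ into a sequence of codimension-one steps. Write $h = \height(I)$ and $d = \height(I/\f)$. The base case $c=1$ is the hypersurface theorem of \cite{HochsterNunezBetancourt,KatzmanZhang} (or can be re-derived from the short exact sequence $0\to R/f \to H^1_f(R)_{\text{fed}} \xrightarrow{f} H^1_f(R)_{\text{nat}} \to 0$ together with the fact that the cokernel of the $R\ncfrob$-linear map $H^{d}_{I/f}(R/f)\to H^{d+1}_{I/f}(H^1_f(R)_{\text{fed}})\cong H^{d+1+1}_I(R)$ coming from the associated long exact sequence is a homomorphic image of a finitely generated $R\ncfrob$-module, hence has closed support, while its kernel injects into something with finitely many associated primes). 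For the inductive step, I would \emph{not} try to compare $H^{d+c}_{I/\f}(R/\f)$ directly to $H^{d+c}_{I/\f}(H^c_{\f}(R)_{\text{fed}})$ as in \eqref{les2} — as the introduction stresses, the Fedder-action iterated local cohomology need not be finitely generated over $R\ncfrob$ — but rather to filter $H^{d+c}_{I/\f}(R/\f)$ through the kernels $N^a_{\f}$ of the differentials in the augmented $\DDelta^\bullet_\f(R)$ complex.

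Concretely: apply $\Gamma_{I/\f}(-)$ (equivalently $\Gamma_I(-)$, since $I\supseteq\f$) to the short exact sequences of $R\ncfrob$-modules
\[
0\to N^a_{\f}\to \bigoplus_{|S|=c-a} H^a_{\f_{[c]-S}}(R/\f_S)\to N^{a+1}_{\f}\to 0, \qquad 1\le a<c,
\]
starting from $N^1_\f = R/\f$, and finishing with $0\to N^c_\f \to H^c_\f(R)_{\text{fed}}\xrightarrow{f} H^c_\f(R)_{\text{nat}}\to 0$. Each middle term $H^a_{\f_{[c]-S}}(R/\f_S)$ is the local cohomology of the complete intersection $R/\f_S$ in codimension $a<c$, so by induction (applied to the ideal $I/\f_S$ in $R/\f_S$, whose defining regular sequence has length $a$, and whose analogous vanishing hypothesis I must extract — see below) the modules $H^{d+a}_{I/\f_S}(R/\f_S)$ have closed support; and the one genuinely new module in the top sequence, $H^{d+c}_{I/\f}(H^c_\f(R)_{\text{nat}}) \cong H^{d+h}_I(R)_{\text{nat}}$, is finitely generated over $R\ncfrob$ with finitely many associated primes by the second Lemma above. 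Chasing the long exact sequences and using that a cokernel of an $R\ncfrob$-linear map from a finitely-generated (resp. closed-support) $R\ncfrob$-module again has closed support, while a submodule of a module with finitely many associated primes has closed support provided \emph{its} cokernel does, one peels off one cohomological degree at a time and concludes that $H^{d+c}_{I/\f}(R/\f)$ sits in a short exact sequence whose two outer terms have closed support.

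The step I expect to be the main obstacle is the bookkeeping of cohomological \emph{degrees} and the propagation of the vanishing hypothesis down the induction. The vanishing $H^i_I(R)=0$ for $h<i<h+c$ is exactly what is needed to force the connecting maps in the long exact sequences to land in the single relevant degree $d+c$ (so that the filtration collapses to a single short exact sequence rather than a long, uncontrolled one); I will need the analogue $H^i_{I/\f_S}(R/\f_S)=0$ for $\height(I/\f_S)<i<\height(I/\f_S)+(c-|S|)$, and verifying that this follows from the hypothesis on $I$ in $R$ — using that $R/\f_S$ is a complete intersection, that $H^{j}_{I/\f_S}(R/\f_S)\cong H^{j+|S|}_I(R)$, and that $\height(I/\f_S) = h - |S|$ since $\f_S$ is part of a regular sequence inside $I$ — is the technical heart of the argument. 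Once the degree count is pinned down, the closed-support conclusions follow formally from the two support Lemmas and the exactness of $\DDelta^\bullet_\f(R)$ established in Theorem~\ref{DDelta is exact}.
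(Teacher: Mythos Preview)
Your structural setup is right --- break the augmented $\DDelta^\bullet_\f(R)$ complex into short exact sequences $0\to N^a_\f\to \DDelta^a_\f(R)\to N^{a+1}_\f\to 0$ and chase long exact sequences --- but the induction on $c$ cannot close. When you apply $H^\bullet_I$ to the first sequence $0\to N^1_\f\to \DDelta^1_\f(R)\to N^2_\f\to 0$ at degree $t+c$ (with $t=\height(I/\f)$), the right-hand term is
\[
H^{t+c}_I\Bigl(\bigoplus_{|S|=c-1}H^1_{\f_{[c]\setminus S}}(R/\f_S)\Bigr)\;=\;\bigoplus_{|S|=c-1}H^{t+c+1}_I(R/\f_S),
\]
so what you actually need is control of $H^{t+c+1}_I(R/\f_S)$ --- specifically, that it has finitely many associated primes. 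Your inductive hypothesis (the theorem for the length-$(c-1)$ sequence $\f_S$ in $R$) only yields closed support of $H^{\height(I/\f_S)+(c-1)}_I(R/\f_S)=H^{t+c}_I(R/\f_S)$: one degree too low, and the wrong finiteness property. The same off-by-one propagates down the chain to each $N^a_\f$. (Separately, the displayed identification $H^{j}_{I/\f_S}(R/\f_S)\cong H^{j+|S|}_I(R)$ is false; the correct collapse is $H^j_I(H^{|S|}_{\f_S}(R))\cong H^{j+|S|}_I(R)$, which says nothing about $R/\f_S$.)

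The paper avoids this by abandoning induction on $c$ entirely. Instead it formulates three auxiliary claims about $H^j_I(N^a_{\f_T})$ for \emph{all} subsets $T\subseteq[c]$ simultaneously: (i) vanishing in a range $t+c+2-a\le j\le t+2c-1-|T|$, (ii) finite generation over $R\ncfrob$ at $j=t+c+1-a$, and (iii) finite associated primes at $j=t+2c-|T|$. These are proved by a double induction --- increasing on $|T|$, then decreasing on $a$ for fixed $T$ --- with the base cases $a=|T|$ coming directly from the augmented sequence $0\to N^b_{\f_T}\to H^b_{\f_T}(R)_{\text{fed}}\to H^b_{\f_T}(R)_{\text{nat}}\to 0$ and the vanishing hypothesis on $H^\bullet_I(R)$. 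Claim (iii) for $|T|=c-1$, $a=1$ is exactly the missing control of $H^{t+c+1}_I(R/\f_S)$; it does not follow from the theorem at smaller $c$, but from claim (i) at smaller $|T|$ combined with the final step descending in $a$. Your degree bookkeeping instinct was right --- that \emph{is} the heart of the argument --- but the induction has to be run on the auxiliary claims, not on the codimension.
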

\begin{proof}
  For convenience, write $t=\text{ht}(I/{\fR})=\text{ht}(I)-c$. We will make heavy use of the $N^a_{\f_T}$ notation introduced in the preceding discussion.
  
  Our first aim is to show that the following three statements hold for all $b$ such that $1\leq b\leq c$, 

  \begin{enumerate}
  \item[(i)] For any subset $T\subseteq [c]$ of size $|T|=b$, and for all $a$ satisfying $\text{max}(1,3-c+b)\leq a\leq b$ (an empty range of values if $c\leq 2$), it holds that $H^{j}_I(N^a_{\f_T})=0$ whenever
        \[t+c+2-a\leq j\leq t+2c-1-b.\]    
  \item[(ii)] For any subset $T\subseteq [c]$ of size $|T|=b$, and for all $a$ satisfying $\text{max}(1,2-c+b)\leq a\leq b$ (an empty range if $c=1$) the module $H^{t+c+1-a}_I(N^a_{\f_T})$ is finitely generated over $R\ncfrob$.
  \item[(iii)] For any subset $T\subseteq [c]$ of size $|T|=b$, and for all $\text{max}(1,2-c+b)\leq a\leq b$ (an empty range of values if $c=1$), the module $H^{t+2c-b}_I(N^a_{\f_T})$ has a finite set of associated primes.
  \end{enumerate}

The proof is by induction on $b$, beginning with the case $b=1$. We will actually start by showing that the statements hold whenever $b=a$, i.e., for the modules $N^b_{\f_T}$ when $|T|=b$. This immediately implies the $b=1$ case, since $N^1_{f_j}=R/f_j$. So, fix $1\leq b\leq c$ and let $T\subseteq [c]$ be a subset of size $|T|=b$. Concerning the module $N^b_{\f_T}$, we have an exact sequence
  \[0\to N^b_{\f_T}\to H^b_{\f_T}(R)_{\text{fed}}\xrightarrow{f_T} H^b_{\f_T}(R)_{\text{nat}} \to 0\]
  From the long exact sequence induced by $\Gamma_I(-)$ along with our vanishing hypothesis $H^i_I(R)=0$ for $t+c+1\leq i\leq t+2c-1$, it is readily verified that (i) so long as $c\geq 3$, $H^j_I(N^b_{\f_T})=0$ for $t+c+2-b\leq j\leq t+2c-1-b$, (ii) that so long as $c\geq 2$, $H^{t+c+1-b}_I(N^b_{\f_T})$ is a homomorphic image of $H^{t+c}_I(R)_{\text{nat}}$ (and is therefore finitely generated over $R\ncfrob$), and finally, (iii) that so long as $c\geq 2$, $H^{t+2c-b}_I(R/\f_T)$ is isomorphic to a submodule of $H^{t+2c}_I(R)$ (and hence has a finite set of associated primes).

  Now take $b$ in the range $2\leq b\leq c$, since the case $b=1$ is proven. Suppose that the statements (i)--(iii) have been shown for subsets $S\subseteq [c]$ of size $|S|<b$, and fix $T\subseteq [c]$ any subset of size $b$. For this set $T$, we will demonstrate the claims (i)--(iii) about the modules $N^a_{\f_T}$ by a decreasing induction on $a$. The case $a=b$ has already been shown.
  
  Fix $a< b$ and suppose we've proven (i)--(iii) for the modules $N^{r}_{\f_T}$ whenever $a<r\leq b$. We will show that the statements hold for the module $N^a_{\f_T}$ using the short exact sequence
  \[0\to N^a_{\f_T}\to \bigoplus_{S\subseteq T,\,\, |S|={b-a}} H^a_{\f_{T-S}}(R/\f_S)\to N^{a+1}_{\f_T}\to 0,\]

  To show claim (i), fix $j$ in the range $t+c+2-a\leq j\leq t+2c-1-b$ and consider the exact sequence
  \[\cdots\to H^{j-1}_I(N^{a+1}_{\f_T})\to H^j_I(N^a_{\f_T})\to \bigoplus_{S\subseteq T,\,\, |S|={b-a}} H^{j}_{I}(H^a_{\f_{T-S}}(R/\f_S))\to \cdots\]
  note that for each subset $S\subseteq T$ of size $|S|=b-a$, we have \[H^{j}_I(H^a_{\f_{T-S}}(R/\f_S))= H^{j+a}_I(R/\f_S)\] where $R/\f_S=N^1_{\f_S}$. The inequality $t+c+1\leq j+a \leq t+2c-1-(b-a)$ gives us vanishing $H^{j+a}_{I}(N^1_{\f_S})=0$ for each subset $S\subseteq T$ of size $b-a$ by induction, since $|S|<|T|$. Since $t+c+2-(a+1) \leq j-1\leq t+2c-1-b$, we also have $H^{j-1}_I(N^{a+1}_{\f_T})=0$ by induction, since $a+1>a$. The vanishing of $H^j_I(N^a_{\f_T})$ follows at once.
  
  For claim (ii), the relevant exact sequence is
  \[\cdots\to H^{t+c-a}_I(N^{a+1}_{\f_T})\to H^{t+c+1-a}_I(N^a_{\f_T})\to \bigoplus_{S\subseteq T,\,\, |S|={b-a}} H^{t+c+1}_I(N^1_{\f_S})\to \cdots\]
Since $a\geq \text{max}(1,2-c+b)$, we have that $1\geq 3-c+(b-a)$, so claim (i) for the module $N^1_{\f_S}$ implies that $H^{t+c+1}_I(N^1_{\f_S})=0$. Thus, $H^{t+c+1-a}_I(N^a_{\f_T})$ is an $R\ncfrob$ homomorphic image of $H^{t+c+1-(a+1)}_I(N^{a+1}_{\f_T})$, which is finitely generated over $R\ncfrob$ by induction on $a$.

  To show claim (iii), consider the exact sequence
  \[\cdots\to H^{t+2c-b-1}_I(N^{a+1}_{\f_T})\to H^{t+2c-b}_I(N^a_{\f_T})\to \bigoplus_{S\subseteq T,\,\, |S|={b-a}} H^{t+2c-b+a}_I(N^1_S)\to \cdots\]
  The condition $a\geq 2-c+b$ implies that $a+1\geq 3-c+b$, so claim (i) for the module $N^{a+1}_{\f_T}$ shows that $H^{t+2c-1-b}_I(N^{a+1}_{\f_T})=0$. Thus $H^{t+2c-b}_I(N^a_{\f_T})$ is isomorphic to a submodule of a direct sum of modules of the form $H^{t+2c-(b-a)}_I(N^1_S)$, for $S\subseteq T$ a subset of size $b-a$. Since $2-c+(b-a)\leq 1$, each $H^{t+2c-(b-a)}_I(N^1_S)$ has a finite set of associated primes. The induction is complete and the claims (i)--(iii) have been demonstrated.
  
  We are now ready to show that $H^{t+c}_I(R/{\fR})=H^{t+c}_I(N^1_{\fR})$ has closed support. For $c\geq 2$ (the $c=1$ argument follows similarly, cf. Section 1), there is an exact sequence
  \[\cdots\to H^{t+c-1}_I(N^2_{\fR})\to H^{t+c}_I(N^1_{\fR})\to \bigoplus_{S\subseteq T,\,\, |S|=1} H^{t+c+1}_I(N^1_{\f_S})\to\cdots\]
  Since $2\geq \text{max}(1,2+c-c)$, the module $H^{t+c+1-2}_I(N^2_{\fR})$ is finitely generated over $R\ncfrob$, and thus, any $R\ncfrob$ homomorphic image of this module has closed support. Additionally, $1\geq 2-c+(c-1)$, so $H^{t+2c-(c-1)}_I(N^1_{\f_S})$ (for each singleton set $S\subseteq T$) has a finite set of associated primes. The claim about the support of $H^{t+c}_I(N^1_{\fR})$ follows at once.
\end{proof}

%%%%%%%%%%%%%%%%%%%%%%%%%%%%%%%%%%%%%%%%%%%%%%%%%%%%%%%%%%%%%%%%%%%%%%%%%%%%%%%%%%%%%%%%%%%%%%%%%%%
\subsection{Nesting of Supports}

In this subsection, we remark that the support of the local cohomology of a complete intersection cut out by a regular sequence $f_1,\ldots,f_c$ has a curious nesting property in relation to the supports of the local cohomologies of the complete intersections defined by subsequences of $f_1,\ldots,f_c$.
\newcommand{\h}{\mathbf{\underline{h}}}
\begin{theorem}
  Let $R$ be a Cohen-Macaulay ring of prime characteristic $p>0$, let $\f=f_1,\ldots,f_c$ be a permutable regular sequence, and let $I$ be an ideal containing ${\fR}$. For $T\subseteq [c]$, let $\f_T$ be the ideal generated by the subsequence of $f_1,\ldots,f_c$ indexed by $T$. For any $\delta\geq 0$,  
  \[\Supp H^{\text{ht}(I/\f_T)+\delta}_{I/\f_T}(R/\f_T)\subseteq \Supp H^{\text{ht}(I/{\fR})+\delta}_{I/{\fR}}(R/{\fR})\]  

  In particular, if $\h=f_1,\ldots,f_c,g_1,\ldots,g_t$ is a maximal length regular sequence in $I$ and if $\h$ is permutable, then
  \[\Supp H^{\text{ht}(I)+\delta}_I(R)\subseteq \Supp H^{\text{ht}(I/{\fR})+\delta}_{I/{\fR}}(R/{\fR})\subseteq \Supp H^\delta_{I/\h}(R/\h)\]
\end{theorem}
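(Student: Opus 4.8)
\emph{Plan.} The idea is to reduce the whole statement to a single ``kill one element of the regular sequence'' step, and to prove that step directly from the long exact sequence of local cohomology; the Cohen--Macaulay hypothesis will enter only through height bookkeeping, and prime characteristic will not really be needed.

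First I would isolate the following \emph{core lemma}: if $S$ is Cohen--Macaulay, $J\subseteq S$ an ideal, and $h\in J$ a nonzerodivisor, then $\text{ht}(J/h)=\text{ht}(J)-1$ and, for every $\delta\geq 0$,
\[
\Supp H^{\text{ht}(J)+\delta}_{J}(S)\subseteq \Supp H^{\text{ht}(J/h)+\delta}_{J/h}(S/h).
\]
To get the first displayed inclusion of the theorem from this, fix $T\subsetneq[c]$ and $j\in[c]\setminus T$ and apply the core lemma with $S=R/\f_T$ (Cohen--Macaulay, since $\f_T$ is a regular sequence), $J=I/\f_T$, and $h$ the image of $f_j$ in $R/\f_T$; permutability of $\f$ guarantees that $\f_T,f_j$ is a regular sequence, so $h$ is a nonzerodivisor on $R/\f_T$. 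Since $R$ is Cohen--Macaulay, $\text{ht}(I/\f_U)=\text{ht}(I)-|U|$ for every $U\subseteq[c]$, so the core lemma reads exactly as $\Supp H^{\text{ht}(I/\f_T)+\delta}_{I/\f_T}(R/\f_T)\subseteq \Supp H^{\text{ht}(I/\f_{T\cup\{j\}})+\delta}_{I/\f_{T\cup\{j\}}}(R/\f_{T\cup\{j\}})$; chaining this along $T=T_0\subsetneq T_1\subsetneq\cdots\subsetneq T_m=[c]$ yields the general inclusion. The ``in particular'' statement then follows by applying the first inclusion twice: once over $R$ with $T=\emptyset$, giving $\Supp H^{\text{ht}(I)+\delta}_I(R)\subseteq\Supp H^{\text{ht}(I/\f)+\delta}_{I/\f}(R/\f)$, and once over the Cohen--Macaulay ring $R/\f$ using the regular sequence given by the images of $g_1,\dots,g_t$ (permutable because $\h$ is), the ideal $I/\f$, and $T=\emptyset$; since $\h$ is a maximal regular sequence in $I$ we have $\text{ht}(I)=c+t$, so the relevant height over $R/\h$ is $0$ and the target module there is $H^\delta_{I/\h}(R/\h)$.

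Next I would prove the core lemma. The height identity $\text{ht}(J/h)=\text{ht}(J)-1$ is standard for a nonzerodivisor $h\in J$ in a Cohen--Macaulay ring. Applying $\Gamma_J(-)$ to $0\to S\xrightarrow{h}S\to S/h\to 0$ and reading off the long exact sequence around degree $i:=\text{ht}(J)+\delta$ (here $i\geq 1$), the connecting map fits into $H^{i-1}_J(S/h)\xrightarrow{\partial}H^i_J(S)\xrightarrow{h}H^i_J(S)$, so $\text{Im}(\partial)=(0:_{H^i_J(S)}h)$. Since local cohomology does not depend on the base ring, $H^{i-1}_J(S/h)=H^{i-1}_{J/h}(S/h)$, and $i-1=\text{ht}(J/h)+\delta$; thus there is a surjection $H^{\text{ht}(J/h)+\delta}_{J/h}(S/h)\twoheadrightarrow(0:_{H^i_J(S)}h)$, and it remains only to check $\Supp H^i_J(S)\subseteq\Supp(0:_{H^i_J(S)}h)$. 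This last point is where $h\in J$ is used: for any $J$-torsion module $M$ and any $\mathfrak{p}\in\Supp M$ one has $h\in J\subseteq\mathfrak{p}$, and taking $0\neq m\in M_{\mathfrak{p}}$ killed by the least power $(JS_{\mathfrak{p}})^e$, any nonzero element of $(JS_{\mathfrak{p}})^{e-1}m$ (or $m$ itself when $e=1$) lies in $(0:_{M}h)_{\mathfrak{p}}$, so $\mathfrak{p}\in\Supp(0:_M h)$.

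The main obstacle, to the extent there is one, is purely bookkeeping: arranging the height identities $\text{ht}(I/\f_U)=\text{ht}(I)-|U|$ and $\text{ht}(I)=c+t$ (both consequences of the Cohen--Macaulay hypothesis) so that the degrees in the core lemma line up with those in the theorem, and being careful that the support argument in the last step genuinely uses $h\in J$ rather than merely that $h$ is a nonzerodivisor. Since no step uses prime characteristic, I would also double-check whether that hypothesis is actually needed in the statement or only carried over for context.
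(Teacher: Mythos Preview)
Your argument is correct, and it takes a genuinely different route from the paper's. The paper first extends $\f$ to a maximal regular sequence $\h=f_1,\dots,f_c,g_1,\dots,g_t$ in $I$ and rewrites each $H^{\text{ht}(I/\f_T)+\delta}_{I/\f_T}(R/\f_T)$ as an iterated local cohomology $H^\delta_I\bigl(H^{t+c-|T|}_{\h_{[c+t]\setminus T}}(R/\h_T)\bigr)$, thereby fixing the outer index at $\delta$; its one-step lemma then compares $\Supp H^\delta_I(H^{t+1}_{\f,h}(A))$ with $\Supp H^\delta_I(H^t_{\f}(A/h))$ by writing $H^{t+1}_{\f,h}(A)=\varinjlim_n H^t_{\f}(A/h^n)$ and inducting on $n$ via the short exact sequences $0\to A/h\to A/h^n\to A/h^{n-1}\to 0$. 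Your core lemma is instead the bare statement $\Supp H^i_J(S)\subseteq \Supp H^{i-1}_{J/h}(S/h)$, proved immediately from the long exact sequence of $0\to S\xrightarrow{h}S\to S/h\to 0$ together with the observation that a $J$-torsion module has the same support as its $h$-socle when $h\in J$. This avoids both the extension to $\h$ for the first displayed inclusion and the direct-limit induction, and makes transparent that the Cohen--Macaulay hypothesis is used only for the height bookkeeping $\text{ht}(I/\f_U)=\text{ht}(I)-|U|$. The paper's formulation, on the other hand, packages the statement in the iterated form $H^\delta_I(H^{\bullet}_{\cdot}(\cdot))$ that matches the $\DDelta$-complex viewpoint of the rest of the article. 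Your remark that characteristic $p$ plays no role is correct and applies equally to the paper's proof.
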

\begin{proof}
  Let $\h=f_1,\ldots,f_c,g_1,\ldots,g_t$ be a maximal length regular sequence contained in $I$. Via the obvious inclusions $T\subseteq [c]\subseteq [c+t]$, we may write ${\fR}=\h_{[c]}$ and $\f_T=\h_T$. Let $b=|T|$. Observe that
  \[H^{\text{ht}(I/\f_T)+\delta}_{I/\f_T}(R/\f_T)=H^{\delta}_I\left(H^{t+c-b}_{\h_{[c+t]-T}}(R/\h_T)\right),\]
  and that
  \[H^{\text{ht}(I/{\fR})+\delta}_{I/{\fR}}(R/{\fR})=H^{\delta}_I\left(H^{t}_{\h_{[c+t]-[c]}}(R/\h_{[c]})\right).\]
  Let $A=R/\h_T$, and consider the ring $R/\h_{[c]}$ as being cut out from $A$ by a regular sequence of length $c-b$ (indexed by the set $[c]-T$). The result follows by a straightforward induction using the following lemma.
\end{proof}

\begin{lemma}
  Let $A$ be a Noetherian ring, let $f_1,\cdots,f_t,h\in A$ be a permutable regular sequence, and let $\f=f_1,\dots,f_t$. Let $I$ be an ideal containing $\f,h$. Then for any $\delta\geq 0$,
  \[\Supp H^\delta_I(H^{t+1}_{\f,h}(R)) \subseteq \Supp H^\delta_I(H^t_{\f}(R/h))\]
\end{lemma}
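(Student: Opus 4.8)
The plan is to realize $H^t_{\f}(A/h)$ as a submodule of $H^{t+1}_{\f,h}(A)$ sitting inside a three-term exact sequence, apply $H^\delta_I(-)$, and then use that $H^{t+1}_{\f,h}(A)$ — and hence its local cohomology — is $h$-power-torsion. (Here $A$ is the ring written $R$ in the displayed containment.)

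First I would produce the short exact sequence
\[
0 \longrightarrow H^t_{\f}(A/h) \longrightarrow H^{t+1}_{\f,h}(A) \xrightarrow{\ h\ } H^{t+1}_{\f,h}(A) \longrightarrow 0 .
\]
Applying the long exact sequence of the local cohomology $\delta$-functor $H^\bullet_{(\f,h)}(-)$ to $0\to A\xrightarrow{h}A\to A/h\to 0$ does it: since $f_1,\dots,f_t,h$ is a regular sequence of length $t+1$ we have $H^j_{(\f,h)}(A)=0$ for $j\neq t+1$, and since permutability makes $f_1,\dots,f_t$ a regular sequence on $A/h$ we have $H^j_{(\f,h)}(A/h)=H^j_{\f}(A/h)=0$ for $j\neq t$; the only nonvanishing portion of the long exact sequence is therefore the displayed three-term complex, with first map a connecting homomorphism. (Equivalently, the injection $H^t_{\f}(A/h)\hookrightarrow H^{t+1}_{\f,h}(A)$ and the identification of its image with $(0:_{H^{t+1}_{\f,h}(A)}h)$ follow from Proposition~\ref{fedemb} after reordering $h,\f$ into $\f,h$, while surjectivity of multiplication by $h$ on $H^{t+1}_{\f,h}(A)$ is visible from its realization as the top \v{C}ech cohomology of $f_1,\dots,f_t,h$, on which $h$ is invertible.)

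Next I would apply $H^\delta_I(-)$ and read off the $h$-socle. Writing $N:=H^{t+1}_{\f,h}(A)$, the induced long exact sequence shows that $\big(0:_{H^\delta_I(N)}h\big)=\ker\!\big(h\colon H^\delta_I(N)\to H^\delta_I(N)\big)$ is the image of the map $H^\delta_I\!\big(H^t_{\f}(A/h)\big)\to H^\delta_I(N)$ induced by the inclusion, hence an $A$-module homomorphic image of $H^\delta_I(H^t_{\f}(A/h))$, so $\Supp\big(0:_{H^\delta_I(N)}h\big)\subseteq \Supp H^\delta_I(H^t_{\f}(A/h))$. Finally I would remove the socle using torsion: $N$ is $(\f,h)$-power-torsion, hence $h$-power-torsion, so $N_h=0$, and since local cohomology commutes with localization $\big(H^\delta_I(N)\big)_h\cong H^\delta_{IA_h}(N_h)=0$; thus $H^\delta_I(N)$ is itself $h$-power-torsion. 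For any $h$-power-torsion $A$-module $X$ one has $\Supp X=\Supp(0:_X h)$: the nontrivial inclusion holds because if $X_{\mathfrak p}\neq 0$, choosing $x\in X_{\mathfrak p}$ with $h^k x=0$ for minimal $k\geq 1$ exhibits $h^{k-1}x$ as a nonzero element of $(0:_{X_{\mathfrak p}}h)=(0:_X h)_{\mathfrak p}$. Taking $X=H^\delta_I(N)$ and combining yields
\[
\Supp H^\delta_I(N)=\Supp\big(0:_{H^\delta_I(N)}h\big)\subseteq \Supp H^\delta_I\!\big(H^t_{\f}(A/h)\big),
\]
which is the assertion.

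I do not anticipate a genuine obstacle: the essential move is just to compare the $h$-socle of $H^\delta_I(N)$ — controlled by $H^\delta_I(H^t_{\f}(A/h))$ via the long exact sequence — and then invoke the torsion trick to pass from the socle back to all of $H^\delta_I(N)$ without enlarging the support. The only points requiring care are the bookkeeping of the regular sequence (using permutability to move $\f$ past $h$, and to see that $\f$ stays regular on $A/h$) and the standard vanishing $H^j_{(x_1,\dots,x_n)}(B)=0$ for $j\neq n$ on a regular sequence, which is exactly what collapses the long exact sequence to the three-term one in the first step.
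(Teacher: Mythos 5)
Your proof is correct but takes a genuinely different route from the paper's. The paper localizes at a prime $P$ outside $\Supp H^\delta_I(H^t_{\f}(R/h))$, writes $H^{t+1}_{\f,h}(R)=\varinjlim_n H^t_{\f}(R/h^n)$, and shows $H^\delta_I(H^t_{\f}(R/h^n))=0$ for all $n$ by induction using the short exact sequences $0\to R/h\xrightarrow{h^{n-1}}R/h^n\to R/h^{n-1}\to 0$ together with some depth bookkeeping on $R/h^{n-1}$ to keep them exact after applying $H^t_{\f}(-)$. You instead work with the single short exact sequence $0\to H^t_{\f}(A/h)\to H^{t+1}_{\f,h}(A)\xrightarrow{\,h\,} H^{t+1}_{\f,h}(A)\to 0$, deduce from the long exact sequence under $H^\delta_I(-)$ that the $h$-socle of $H^\delta_I\bigl(H^{t+1}_{\f,h}(A)\bigr)$ is a homomorphic image of $H^\delta_I\bigl(H^t_{\f}(A/h)\bigr)$, and then exploit that $H^\delta_I\bigl(H^{t+1}_{\f,h}(A)\bigr)$ is $h$-power-torsion (local cohomology commutes with localization and $H^{t+1}_{\f,h}(A)_h=0$), so its support equals that of its $h$-socle. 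Your argument is cleaner: it avoids the induction on $n$, the direct limit, and the depth checks on $R/h^{n-1}$, replacing all of that with the structural fact $\Supp X=\Supp(0:_X h)$ for $h$-power-torsion $X$. Both proofs ultimately rest on the same picture --- $H^{t+1}_{\f,h}(A)$ is an $h$-divisible, $h$-torsion overmodule of $H^t_{\f}(A/h)$ --- but you extract the support containment in one step from the socle, rather than reducing to a vanishing statement prime by prime and approximating by the $R/h^n$ tower.
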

\begin{proof}
  Suppose, after replacing $R$ by $R_P$ for some $P\in \Spec(R)$, we obtain $H^\delta_I(H^t_{\f}(R/h))=0$. We would like to show that $H^\delta_I(H^{t+1}_{\f,h}(R))=0$, where we recall that
  \[H^{t+1}_{\f,h}(R) = H^t_{\f}(H^1_{h}(R))= \varinjlim_n H^t_{\f}(R/h^n)\]
  It would therefore suffice to show that $H^\delta_I(H^t_{\f}(R/h^n))=0$ for all $n\geq 1$. By hypothesis, this is true when $n=1$, so fix $n>1$ and suppose for the sake of induction that $H^\delta_I(H^t_{\f}(R/h^j))=0$ for all $j<n$.
  
  Note that $(h^n :_R h) = h^{n-1}R$, i.e., the annihilator of $h$ in $R/h^nR$ is $h^{n-1}R/h^nR$, isomorphic as an $R$-module to $R/hR$. Mapping $R/hR$ onto the image of $h^{n-1}$, we get
  \[0\to R/h \xrightarrow{h^{n-1}} R/h^n\to R/h^{n-1}\to 0\]  
  inducing the exact sequence
  \[\cdots\to H^{t-1}_{\f}(R/h^{n-1})\to H^t_{\f}(R/h) \to H^t_{\f}(R/h^n)\to H^t_{\f}(R/h^{n-1})\to H^{t+1}_{\f}(R/h)\to\cdots\]

  The arithmetic rank of $\f$ is $t$, so $H^{t+1}_{\f}(R/h)=0$. Since $\h$ is permutable, $h^{n-1},f_1,\cdots,f_t$ is a regular sequence, and consequently, $f_1,\dots,f_t$ is an $R/h^{n-1}$-regular sequence. Since $\depth_{\f}(R/h^{n-1})=t$, we get $H^{t-1}_{\h}(R/h^{n-1})=0$. Thus, $0\to H^t_{\f}(R/h) \to H^t_{\f}(R/h^n)\to H^t_{\f}(R/h^{n-1})\to 0$ is exact, and so is  
  \[\cdots\to H^\delta_I(H^t_{\f}(R/h)) \to H^\delta_I(H^t_{\f}(R/h^n))\to H^\delta_I(H^t_{\f}(R/h^{n-1}))\to\cdots.\]
  
  We have $H^\delta_I(H^t_{\f}(R/h))=0$, and by induction $H^\delta_I(H^t_{\f}(R/h^{n-1}))=0$, so $H^\delta_I(H^t_{\h}(R/h^n))=0$.
\end{proof}

\printbibliography

\end{document}